\documentclass[12pt,letterpaper]{amsart}
\usepackage{geometry}
\geometry{body={7in,9in}, centering}
\usepackage{mathpazo} %Fonts
\usepackage{amsmath,amssymb,amsfonts,amsthm,amscd}
\usepackage{mathrsfs,latexsym,url,setspace,xcolor,graphicx}
\newtheorem{theorem}{Theorem}

\newtheorem*{theorem*}{Theorem}

\newtheorem{proposition}{Proposition}
\newtheorem{lemma}{Lemma}

\theoremstyle{remark}

\newtheorem{remark}{Remark}

%%%%%%%%%%%%%
%Here are some useful Commands
%%%%%%%%%%%%%%

\newcommand{\spec}{\text{Spec}}
\newcommand{\ad}{\text{ad}}

\newcommand{\gr}{\text{gr}}

 %red
 %blue
%%%PDFCOMMENT
\usepackage[author={S},color=yellow,opacity=0.6,markup=Highlight]{
pdfcomment}
%\pdfmarkupcomment{Pdf Comment test.}{This is a pdfcomment  test}
%%%SPACING
%\doublespace
\onehalfspace

\title{The Weyl algebra as a fixed ring}

\author{Akaki Tikaradze}
\email{ tikar06@gmail.com}
\address{University of Toledo, Department of Mathematics \& Statistics, 
Toledo, OH 43606, USA}

%\subjclass[2010]{Primary xxxxx; Secondary xxxxx}
%\keywords{.............., pseudoconvex}
%\thanks{\copyright 2006 by the author. Reproduction of this article, 
%in its entirety, by any means is permitted for non-commercial purposes.}

\date{\today}
\begin{document}

\begin{abstract}
We prove that the Weyl algebra over $\mathbb{C}$ cannot be a fixed ring of any domain under a nontrivial action of a finite
group by algebra automorphisms, thus settling a 30-year old  problem. In fact, we prove the following much more general result.
Let $X$ be a smooth affine variety over  $\mathbb{C}$,  let $D(X)$ denote the ring of algebraic differential
operators on $X,$ and let $\Gamma$ be a finite group.  If $D(X)$ is isomorphic
to the ring of $\Gamma$-invariants  of a $\mathbb{C}$-domain $R$ on which  $\Gamma$ acts faithfully by
$\mathbb{C}$-algebra automorphisms, then $R$ is isomorphic to the ring of differential operators on a $\Gamma$-Galois covering
of $X.$ 
%We also prove that the enveloping algebra of a semi-simple Lie algebra over $\mathbb{C}$ has no nontrivial Galois covering.

\end{abstract}
%%%%%%%%%%%%%%%%%%%%%%%%%
\maketitle
%%%%%%%%%%%%%%%%%%%%%%%%%
\begin{center}
\emph{Dedicated to my parents Natalia and Soso Tikaradze with love and admiration}
\end{center}
%Given a ring $S$, by $W_n(S$) we will denote the $n$-th Weyl algebra over $S.$
%Namely, 
 %          $$W_n(\mathbb{C})=\mathbb{C}\langle x_1,\cdots, x_n,y_1,\cdots,y_n\rangle, \quad[x_i, x_j]=0=[y_i, y_j],\quad [x_i, y_j]=\delta_{ij}.$$

\section{Introduction}

Throughout given a variety $X,$ by $\mathcal{O}(X)$ we will denote the algebra of its global functions.
Also, given a ring $R$, its center will be denoted by $Z(R).$
Let $S$ be a commutative Noetherian ring.
Given a smooth affine variety $X$ over $S,$ we will denote by $D(X)$ its ring of differential operators over $S,$ as usual.
Recall that  $D(X)$ is generated over $\mathcal{O}(X)$ by the Lie algebra of its $S$-derivations $T^1_X=Der_S(\mathcal{O}(X), \mathcal{O}(X)),$
with the following relations 
$$ [\tau, f]=\tau (f), \tau\tau_1-\tau_1\tau=[\tau_, \tau_1], \quad f\in \mathcal{O}(X), \tau, \tau_1\in T_X.$$
Also, by $W_n(S)$ we will denote the $n$-th Weyl algebra over $S.$
Whether the Weyl algebra over $\mathbb{C}$ can be a  fixed point ring of a $\mathbb{C}$-domain under a nontrivial
action of a finite group has been an open problem  in ring theory for some time now.
In this regard it was proved by Smith \cite{S} that if $R$ is a $\mathbb{C}$-domain and $\Gamma\subset Aut_{\mathbb{C}}(R)$
is a finite solvable group of $\mathbb{C}$-algebra automorphisms, such that
 $R^{\Gamma}=W_1(\mathbb{C}),$ then $R=R^{\Gamma}.$
This result was generalized by Canning and Holland \cite{CH} to the ring of differential operators
of an affine smooth algebraic curve over $\mathbb{C}.$ Namely, they showed that if $R$ is a $\mathbb{C}$-domain and $\Gamma$ is a finite solvable group
of $\mathbb{C}$-automorphisms of $R$ such that $R^{\Gamma}\cong D(X),$ where $X$ is a smooth affine curve over $\mathbb{C},$
then there exists a smooth affine curve  $Y$ over $\mathbb{C}$, such that $Y$ is a $\Gamma$-Galois covering of $X$  and $R\cong D(Y).$ 
The proofs are based on the description of the Picard group of invertible bimodules on $D(X).$
No such result is available for high dimensional $X.$ Moreover these proofs do not work
for non-solvable $\Gamma.$

On the other hand Alev and Polo \cite{AP} showed that if $\Gamma$ is an arbitrary finite group
such that if both $R, R^{\Gamma}$ are isomorphic to the $n$-th Weyl algebra $W_n(\mathbb{C})$, then again $R=R^{\Gamma}.$
Similarly they proved that the enveloping algebra of a semi-simple Lie algebra over $\mathbb{C}$ cannot occur
as a fixed point ring of a nontrivial finite group action on an enveloping algebra of a semi-simple Lie algebra.

Recall that given an affine variety $X$ over an algebraically closed field $\bold{k}$, its $\Gamma$-Galois covering
is an affine (possibly disconnected) variety $Y$ together with a $\Gamma$-equivariant  \'etale covering $f:Y\to X,$
such that $\Gamma$ acts simply transitively on the fibers of closed points of $X.$

The main results of this paper  represents the strongest possible
generalization of the above results.

\begin{theorem}\label{Main}

Let $X$ be a smooth affine variety over $\mathbb{C}.$ 
Let $R$ be a $\mathbb{C}$-domain, let $\Gamma\subset Aut_{\mathbb{C}}(R)$ be a finite group of
automorphisms of $R.$  If $R^{\Gamma}=D(X)$, then there exists a $\Gamma$-Galois covering $Y\to X$ such that $R\cong D(Y)$ as $\Gamma$-algebras.

\end{theorem}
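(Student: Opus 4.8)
The plan is to start from the inclusion $D(X) = R^\Gamma \subseteq R$ and extract enough geometric structure to realize $R$ as $D(Y)$. First I would analyze the center and the filtration. Since $D(X)$ is simple with center $\mathbb{C}$, but $R$ need not be simple, the key is to locate a commutative subalgebra of $R$ that will play the role of $\mathcal{O}(Y)$. The natural candidate is built from $\mathcal{O}(X) \subseteq D(X) \subseteq R$: let $A$ be the integral closure of $\mathcal{O}(X)$ inside $R$, or more precisely the subalgebra of $R$ generated by $\Gamma$-translates of $\mathcal{O}(X)$. I expect to show $A$ is a commutative domain, finite as a module over $\mathcal{O}(X)$, with $A^\Gamma = \mathcal{O}(X)$, and that $Y := \operatorname{Spec} A$ is smooth. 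The étale and Galois properties should follow by localizing at the generic point, where the Weyl-algebra/skew-field structure forces the fiber to be a principal homogeneous space: the fraction field extension $\operatorname{Frac}(A)/\operatorname{Frac}(\mathcal{O}(X))$ is Galois with group a quotient of $\Gamma$, and faithfulness of the $\Gamma$-action on $R$ together with the centralizer structure should force it to be all of $\Gamma$.

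Second, with $Y$ in hand, I would construct a map $D(Y) \to R$ (or $R \to D(Y)$) extending the inclusion $\mathcal{O}(Y) = A \hookrightarrow R$. The derivations $T^1_X = \operatorname{Der}_\mathbb{C}(\mathcal{O}(X))$ lift uniquely to derivations of $A$ (because $Y \to X$ is étale), giving a copy of $T^1_Y$ acting on $A$; since elements of $T^1_X \subseteq D(X) \subseteq R$ already normalize $\mathcal{O}(X)$ inside $R$, they should normalize $A$, and the uniqueness of étale lifts should identify this action with the one in $R$, producing a homomorphism $D(Y) \to R$ by the defining presentation of $D(Y)$ over $\mathcal{O}(Y)$. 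This map is $\Gamma$-equivariant by construction and restricts to the identity on $D(X) = R^\Gamma = D(Y)^\Gamma$.

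Third, I would prove this map is an isomorphism. Injectivity: $D(Y)$ is a direct sum of simple rings (one per connected component of $Y$), so any nonzero $\Gamma$-equivariant homomorphism to a domain is injective once one checks the kernel is a $\Gamma$-stable ideal meeting $D(X)$ trivially — and $D(X)$ being simple, the kernel's intersection with $D(X)$ is zero, which by a standard averaging/trace argument (using $|\Gamma|$ invertible) forces the kernel itself to be zero. Surjectivity is the delicate point: I would argue that $R$ is generated over $D(X)$ by $A$, i.e. $R = D(X) \cdot A$. This is where I expect the main obstacle: one must show that $R$ is not "too big," e.g. that $R$ is a finitely generated projective (rank $|\Gamma|$) module over $D(X)$ on each side and that multiplication by $A$ already exhausts it. The tool should be the theory of $D(X)$ as a maximal order together with the fact that $R$ is a reflexive $D(X)$-bimodule of the right rank; alternatively, compare associated graded rings: $\operatorname{gr} R$ should embed in functions on $T^*Y$, and a dimension/Hilbert-series count pins down $R = D(X)A \cong D(Y)$. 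I would carry out the associated-graded comparison carefully, since controlling $\operatorname{gr} R$ and showing it has no nilpotents and the expected Krull dimension is the technical heart.

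The hardest part, to summarize, is establishing that $R$ is integral (in a suitable module-finite sense) over $D(X)$ so that the commutative subalgebra $A$ is large enough to generate $R$; everything else — smoothness of $Y$, the étale/Galois structure, and the construction and injectivity of $D(Y) \to R$ — follows from standard facts about differential operators on smooth varieties, uniqueness of derivation lifts along étale maps, and simplicity of $D$ on connected smooth affines, combined with the order-$|\Gamma|$-invertible averaging available over $\mathbb{C}$.
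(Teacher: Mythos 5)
There is a genuine gap, and it sits precisely where you placed your hopes rather than where you placed the difficulty. The part you call hardest --- that $R$ is module-finite (indeed projective of rank $|\Gamma|$) over $D(X)$ --- is in fact the cheap part: since $D(X)$ is simple with center $\mathbb{C}$, Montgomery's results show the $\Gamma$-action is outer and $R$ is a $\Gamma$-Galois covering of $D(X)$, hence finitely generated projective on each side. The real obstruction, which your sketch assumes away, is twofold. First, you posit that the subalgebra $A$ generated by the $\Gamma$-translates of $\mathcal{O}(X)$ is commutative, finite over $\mathcal{O}(X)$, with $A^{\Gamma}=\mathcal{O}(X)$. There is no a priori reason for any of this: $g(\mathcal{O}(X))$ need not commute with $\mathcal{O}(X)$, and elements of $R$ integral over $D(X)$ need not have bounded ``order'' with respect to $\ad(f)$, $f\in\mathcal{O}(X)$. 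Showing that $\ad(f)$ acts locally nilpotently on $R$ --- equivalently, that $R$ is supported on the diagonal as a $D(X)$-bimodule, which is exactly what makes the centralizer of $\mathcal{O}(X)$ a finite $\mathcal{O}(X)$-algebra and lets Kashiwara's theorem reconstruct $R$ --- is the technical heart, and the paper can only prove it by reduction mod $p$: one spreads out over a finitely generated ring, uses the Bezrukavnikov--Mirkovi\'c--Rumynin theorem that $D(X_{\bold{k}})$ is Azumaya over (the Frobenius twist of) $T^{*}(X_{\bold{k}})$, identifies $R_{\bold{k}}\cong D(Y_{\bold{k}})$ there, and transfers a uniform degree bound back to characteristic zero. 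Your alternative suggestion of a direct associated-graded or maximal-order argument over $\mathbb{C}$ is exactly what has resisted all attempts beyond curves (Cannings--Holland) and is not known to work.

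Second, your phrase ``faithfulness of the $\Gamma$-action on $R$ together with the centralizer structure should force it to be all of $\Gamma$'' glosses over the degenerate scenario that the whole theorem must exclude: the $\Gamma$-translates of $\mathcal{O}(X)$ could all equal $\mathcal{O}(X)$ (so $A=\mathcal{O}(X)$, $Y=X$), with $R$ instead a nonsplit ``vertical'' extension --- e.g.\ the centralizer of $\mathcal{O}(X)$ an Azumaya algebra of degree $>1$ over $X$, on which $\Gamma$ acts by central automorphisms. Ruling this out is a genuinely group-theoretic matter: after reducing to $\Gamma$ simple, the paper invokes the Howlett--Isaacs theorem (no twisted group algebra of a finite simple group is simple) to show $M_n(F)\rtimes\Gamma\not\cong M_m(F)$, hence the $\Gamma$-action on the relevant center must be faithful. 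Nothing in your averaging or fraction-field argument supplies this; indeed the earlier results of Smith and Canning--Holland needed solvability of $\Gamma$ exactly because they lacked such an ingredient. Without the mod-$p$ Azumaya structure and the Howlett--Isaacs input, your construction of $A$ and the surjectivity of $D(Y)\to R$ cannot be completed.
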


As an immediate corollary we get that if $X$ is a smooth affine variety over $\mathbb{C}$ such that
$X(\mathbb{C})$ is simply connected, then $D(X)$, in particular the Weyl algebras $W_n(\mathbb{C})$, cannot be a fixed point ring of
any $\mathbb{C}$-domain under a nontrivial finite group action.

To state our next result we will recall some terminology.
Let $S$ be a commutative ring.
Recall  that an $S$-algebra $R$ is a Galois covering of an algebra $B$ with Galois group $\Gamma$
if $\Gamma$ acts on $R$ via $S$-algebra  automorphisms, 
$B=R^{\Gamma}$ and the skew group ring  $R\rtimes \Gamma$ is isomorphic to $End_{B}(R);$ 
moreover $R$ is a finitely generated projective generator as a $B$-module.
We will say that a $\Gamma$-Galois covering $R$ of an algebra $B$ is a trivial covering if $R\cong \Pi_{g\in \Gamma} B[e_g]$
where $e_g$ are primitive orthogonal central idempotents, $\sum_{g\in \Gamma} e_g=1$ and $\Gamma$ acts on $e_g$ by (left) multiplication on indices.

Let $\mathfrak{g}$ be a semi-simple Lie algebra over $\mathbb{C}.$ Let $U\mathfrak{g}$ be its enveloping algebra.
We will say that a central character $\chi:Z(U\mathfrak{g})\to \mathbb{C}$ is very generic
if its values on the standard generators of $Z(U\mathfrak{g})$ are algebraically independent over $\mathbb{Z}.$
Denote by $U_{\chi}\mathfrak{g}$ the central quotient of $U\mathfrak{g}$ corresponding to $\chi.$
\begin{theorem}\label{g}
 Let $\mathfrak{g}$ be a semi-simple Lie algebra over $\mathbb{C}.$ Then for a very generic central character $\chi$ of $U\mathfrak{g}$,
 there is no integral domain $R$ with a nontrivial finite group of automorphisms $\Gamma\subset Aut_{\mathbb{C}}R$
 so that $R^{\Gamma}=U_{\chi}\mathfrak{g}.$

\end{theorem}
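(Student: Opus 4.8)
The plan is to imitate the reduction-modulo-$p$ argument behind Theorem~\ref{Main}. Note first that $U_\chi\mathfrak g$ is not itself a ring of differential operators on an affine variety when $\chi$ is non-integral: by Beilinson--Bernstein localization it is the ring of global sections of a genuine sheaf of twisted differential operators on the \emph{projective} flag variety $G/B$, so Theorem~\ref{Main} does not apply directly, and the role of a cotangent bundle $T^*X$ will instead be played by an affine Springer-type variety attached to $\mathfrak g$. So suppose $R$ is a $\mathbb C$-domain with $R^\Gamma=A:=U_\chi\mathfrak g$ for a finite nontrivial $\Gamma\subset\operatorname{Aut}_{\mathbb C}R$. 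Since $\operatorname{char}\mathbb C=0$ and $A$ is Noetherian, standard results on fixed rings (Montgomery) give that $R$ is Noetherian and module-finite over $A$, hence an affine $\mathbb C$-algebra. For very generic $\chi$ the leading symbols of $\ker\chi$ cut out the nilpotent cone, so $\operatorname{gr}A$ is its (normal, irreducible) coordinate ring and $A$ is a simple Noetherian domain. As in the proof of Theorem~\ref{Main}, simplicity of $A$ forces $R$ to have no nonzero $\Gamma$-stable ideals, and the usual fixed-ring machinery then exhibits $R$ as a $\Gamma$-Galois covering of $A$: $R\rtimes\Gamma\cong\operatorname{End}_A(R)$, and $R$ is a finitely generated projective generator over $A$. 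It therefore suffices to prove that every $\Gamma$-Galois covering of $A$ is trivial, since a trivial covering with $|\Gamma|\ge 2$ has nontrivial idempotents and so cannot be a domain.

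Next I would spread out over $\mathbb Z$. Because the values $\chi(c_1),\dots,\chi(c_r)$ on the standard generators of $Z(U\mathfrak g)$ are algebraically independent over $\mathbb Z$, they generate a polynomial ring; set $\mathcal O:=\mathbb Z[\chi(c_1),\dots,\chi(c_r)][\tfrac1{|\Gamma|},\tfrac1\delta,\dots]$, where $\delta\in\mathbb Z[c_1,\dots,c_r]$ is the discriminant of the adjoint quotient $\mathfrak g^*\to\mathfrak h^*/W$ and the remaining inverted elements are chosen so that a suitable $\Gamma$-stable affine $\mathcal O$-form $\mathcal R\subset R$, with $\mathcal A:=\mathcal R^\Gamma$ an $\mathcal O$-form of $A$ (the central reduction at $\chi$ of a Chevalley $\mathcal O$-form of $U\mathfrak g$), realizes $\mathcal R$ as a $\Gamma$-Galois covering of $\mathcal A$ over $\mathcal O$. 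For a closed point of $\operatorname{Spec}\mathcal O$ with residue field $k$ of characteristic $p\nmid|\Gamma|$, flat base change --- which commutes with taking $\Gamma$-invariants because $p\nmid|\Gamma|$ --- produces a $\Gamma$-Galois covering $\bar{\mathcal R}$ of $\bar{\mathcal A}=U_{\bar\chi}(\mathfrak g_k)$.

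Then I would invoke the Azumaya description of the central reductions in characteristic $p$. Since $\delta$ is a unit in $\mathcal O$, the reduced character $\bar\chi$ is regular semisimple, so by the classical structure of the centre of the enveloping algebra in positive characteristic (Mirkovi\'c--Rumynin, Brown--Gordon), and for $p$ large, $\bar{\mathcal A}$ is an Azumaya algebra over its centre, with $\operatorname{Spec}Z(\bar{\mathcal A})$ the Frobenius twist $(G/T)^{(1)}$ of the regular semisimple orbit $G/T$ --- a smooth affine variety. Now run the argument of Theorem~\ref{Main} with the Azumaya algebra $\bar{\mathcal A}$ over $(G/T)^{(1)}$ in place of $D(X_k)$ over $(T^*X)^{(1)}$: a $\Gamma$-Galois covering of $\bar{\mathcal A}$ must be the pull-back of $\bar{\mathcal A}$ along a finite étale $\Gamma$-Galois covering of $(G/T)^{(1)}$, and since $|\Gamma|$ is prime to $p$ such coverings are governed by the prime-to-$p$ geometric fundamental group. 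But $G/T$ is a Zariski-locally-trivial affine-space bundle over the simply connected variety $G/B$, so $\pi_1^{(p')}(G/T)=0$, and hence $\pi_1^{(p')}\big((G/T)^{(1)}\big)=0$; therefore $\bar{\mathcal R}\cong\prod_{g\in\Gamma}\bar{\mathcal A}$. On the other hand the generic fibre of $\operatorname{Spec}\mathcal R\to\operatorname{Spec}\mathcal O$ is a domain, hence geometrically connected, so after shrinking $\operatorname{Spec}\mathcal O$ the fibre $\bar{\mathcal R}$ is connected with no nontrivial idempotents. This is incompatible with $\bar{\mathcal R}\cong\prod_{g\in\Gamma}\bar{\mathcal A}$ unless $|\Gamma|=1$, contradicting the assumption that $\Gamma$ is nontrivial.

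The hard part will be the geometric input of the third step --- pinning down $Z(U_{\bar\chi}\mathfrak g)$ and the locus over which $U_{\bar\chi}\mathfrak g$ is Azumaya --- and this is precisely where ``very generic'' is indispensable: it forces $\bar\chi$ to remain regular semisimple for infinitely many primes $p$, so that the relevant variety is the simply connected $(G/T)^{(1)}$ rather than, say, a singular fibre of the adjoint quotient or the variety lying over the regular nilpotent orbit, whose prime-to-$p$ fundamental group (the centre of the simply connected cover of $G$) is generally nontrivial and would leave room for genuinely nontrivial coverings. By contrast, the fixed-ring reductions of the first step and the equivariant, filtration-compatible spreading-out of the second are routine.
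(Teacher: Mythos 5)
Your first three steps track the paper: reduce to a $\Gamma$-Galois covering of the simple algebra $U_{\chi}\mathfrak{g}$ via Montgomery, spread out, and use the Azumaya property of $U_{\bar\chi}(\mathfrak{g}_{\bold{k}})$ over the regular semisimple fiber of the adjoint quotient together with Proposition~\ref{Azu} to see that the covering is controlled by a prime-to-$p$ \'etale $\Gamma$-covering of that fiber; your direct computation $\pi_1^{(p')}(G/T)=0$ (affine-space bundle over $G/B$) is a reasonable substitute for the paper's route through Lemma~\ref{lifting} and Lemma~\ref{Congugacy}. The genuine gap is your final step. You claim that because $R$ is a domain, the special fibers $\bar{\mathcal R}$ have no nontrivial idempotents after shrinking $\mathrm{Spec}\,\mathcal O$, so the splitting $\bar{\mathcal R}\cong\prod_{g\in\Gamma}\bar{\mathcal A}$ is a contradiction. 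This is unjustified: $\mathcal R$ is noncommutative, so the EGA-type constructibility of (geometric) connectedness you are invoking does not apply, and the idempotents in question live in the center of the reduction, which does not come from characteristic zero at all --- here $Z(R)=\mathbb{C}$ while $Z(R_{\bold{k}})$ has Krull dimension equal to $\dim \mathcal{N}_{\bar\chi}>0$ (it is part of the $p$-center). Reduction mod $p$ of a characteristic-zero domain routinely acquires idempotents with no char-$0$ origin (e.g.\ any order in a rational division algebra splits into a matrix algebra modulo almost every prime), so ``$R$ is a domain'' by itself rules out nothing about $\bar{\mathcal R}$.

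Indeed, the paper's own proof arrives exactly at the isomorphism $R_{\bold{k}}\cong\prod_{i=1}^{|\Gamma|}U_{\chi}(\mathfrak{g}_{\bold{k}})e_i$ and does \emph{not} treat it as a contradiction; the entire second half of the argument exists precisely to convert this char-$p$ splitting into a char-$0$ obstruction. Concretely, the paper uses the splitting plus integrality of elements of $R$ over $U_{\chi}\mathfrak{g}$ to bound PBW-degrees mod $p$ and deduce that $\ad(\mathfrak{g})$ acts locally finitely on $R$ (Harish-Chandra property); it then grades $R$ by $\ad$-eigenvalues of semisimple generators $h_1,\dots,h_m$, shows the idempotents $e_i$ sit in degree $0$, and concludes that the centralizer $R_0$ of $\mathfrak{g}$ in $R$ is commutative (its reduction lands in $\prod Z(U_{\chi}(\mathfrak{g}_{\bold{k}}))e_i$) and strictly larger than $\mathbb{C}$ (it must account for the $e_i$), while $R_0^{\Gamma}$ equals the centralizer of $\mathfrak{g}$ in $U_{\chi}\mathfrak{g}$, namely $\mathbb{C}$; a commutative domain properly containing $\mathbb{C}$ with $\Gamma$-invariants $\mathbb{C}$ is impossible. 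Some argument of this kind (or another genuine mechanism transporting the splitting back to characteristic zero) is what is missing from your proposal; without it the proof does not close.
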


Notice that it is necessary to require for the central character $\chi:Z(U\mathfrak{g})\to \mathbb{C}$ to be
rather generic for the above result to hold, see \cite{S1}.

If $\Gamma$ is a finite subgroup of algebra automorphisms of $R$, and
 $H$ is a normal subgroup of $\Gamma,$ then as $R^{\Gamma}=(R^{H})^{\Gamma/H}$, proofs of above theorems
easily reduce to the case of a simple group $\Gamma.$
Thus we will be assuming from now on that $\Gamma$ is a simple group.

The proof is  based on the reduction $\mod p$ technique for a large enough prime $p.$

%\begin{conjecture}
%Let $X$ be a smooth affine variety over $\mathbb{C},$ let $D(X)$ denote the ring of algebraic differential operators on $X.$
%Let $\Gamma$ be a finite subgroup of automorphisms of a $\mathbb{C}$-domain $R$ such that $R^{\Gamma}\cong D(X).$
%Then there exists a Galois covering $Y\to X$ with the Galois group $G$ such that $R\cong D(Y)$ as $\Gamma$-algebra.
%\end{conjecture}

\section{Preliminary results}

At first we will need to recall a fundamental observation due to Bezrukavnikov, Mirkovic and
Rumynin [\cite{BMR}, Theorem 2.2.3] which asserts that given a smooth variety $X$ (which for us will always be affine) over
an algebraically closed field $\bold{k}$ of characteristic $p>0$, then its ring of (crystalline) differential
operators $D(X)$ is an Azumaya algebra over the Frobenius twist of the cotangent bundle $T^{*}(X).$
Namely, given $ \theta\in T_X=Der_{\bold{k}}(\mathcal{O}(X), \mathcal{O}(X)),$ denote by ${\theta}^{[p]}\in T_X$ the $p$-th power of derivation
$\theta.$ Then the center of $D(X)$ is generated by $$f^p, \theta^p-\theta^{[p]},\quad f\in \mathcal{O}(X),\quad \theta\in T_X.$$

We will start by establishing an easy commutative version of Theorem \ref{Main}.
\begin{lemma}\label{Poisson}
Let $X$ be a smooth affine variety over  an algebraically closed field $\bold{k}.$  
Let  $f:Y\to T^{*}(X)$ be a Galois covering with Galois group $\Gamma.$ Assume that $p=char(\bold{k})$ does not divide $|\Gamma|.$
Then there exists an affine variety $Y'$ equipped with a Galois covering $f':Y'\to X$ with Galois group 
$\Gamma,$ such that $Y\cong T^{*}(Y')$ interchanging $f,T^{*}(f')$.
\end{lemma}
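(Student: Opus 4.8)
\medskip

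\noindent\emph{Proposed approach.} The plan is to obtain $Y'$ for free, as the restriction of $f$ to the zero section, and then to recognise $Y$ as $T^{*}(Y')$ by comparing the two coverings of $T^{*}(X)$ over the generic point, where the special geometry of affine $n$-space in characteristic $p$ makes the comparison go through.

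First I would set $\iota\colon X\hookrightarrow T^{*}(X)$ for the zero section, $\pi\colon T^{*}(X)\to X$ for the projection, and put $Y':=\iota^{*}Y=Y\times_{T^{*}(X)}X$. As the base change of the étale morphism $f$ along the closed immersion $\iota$, the map $f'\colon Y'\to X$ is finite étale, and $Y'$, being closed in the affine reduced scheme $Y$, is again an affine variety. The $\Gamma$-action on $Y$ over $T^{*}(X)$ restricts to $Y'$ over $X$; since $\iota(x)$ is a closed point of $T^{*}(X)$ for every closed point $x\in X$ and $\Gamma$ acts simply transitively on $Y_{\iota(x)}=Y'_{x}$, the morphism $f'$ is a $\Gamma$-Galois covering. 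Because $f'$ is étale, $\Omega^{1}_{Y'}=(f')^{*}\Omega^{1}_{X}$, hence $T_{Y'}=(f')^{*}T_{X}$, and therefore
\[
T^{*}(Y')=\operatorname{Spec}_{Y'}\operatorname{Sym}\bigl((f')^{*}T_{X}\bigr)=Y'\times_{X}T^{*}(X),
\]
with $T^{*}(f')$ the projection onto $T^{*}(X)$. So the whole problem reduces to producing a $\Gamma$-equivariant isomorphism $Y\cong Y'\times_{X}T^{*}(X)$ over $T^{*}(X)$.

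Next I would reduce this to a statement over the generic point. Both $Y$ and $Y'\times_{X}T^{*}(X)$ are finite étale over the integral normal scheme $T^{*}(X)$ (one may assume $X$, hence $T^{*}(X)$, irreducible, treating connected components of $X$ separately), so each is the normalization of $T^{*}(X)$ in its ring of rational functions (an étale $\Gamma$-algebra over $\mathbf{k}(T^{*}(X))$); it is therefore enough to identify their fibres over the generic point $\xi$ of $T^{*}(X)$. Writing $\eta=\operatorname{Spec}K$, $K=\mathbf{k}(X)$, for the generic point of $X$, one has $\pi^{-1}(\eta)=\mathbb{A}^{n}_{K}$ with $\xi$ its generic point, so it suffices to give a $\Gamma$-equivariant isomorphism of coverings of $\mathbb{A}^{n}_{K}$ between $Y\times_{T^{*}(X)}\mathbb{A}^{n}_{K}$ and $\bigl(Y'\times_{X}T^{*}(X)\bigr)\times_{T^{*}(X)}\mathbb{A}^{n}_{K}$. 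The latter equals $Y'_{\eta}\times_{K}\mathbb{A}^{n}_{K}=\operatorname{Spec}(L)\times_{K}\mathbb{A}^{n}_{K}$, where $L=\mathcal{O}(Y'_{\eta})$ is the finite étale $\Gamma$-Galois $K$-algebra defining $f'$ over $\eta$; and $Y\times_{T^{*}(X)}\mathbb{A}^{n}_{K}$ is a $\Gamma$-torsor over $\mathbb{A}^{n}_{K}$ with $p\nmid|\Gamma|$, i.e.\ (up to conjugacy) a homomorphism $\rho\colon\pi_{1}(\mathbb{A}^{n}_{K})\to\Gamma$. The kernel of $\pi_{1}(\mathbb{A}^{n}_{K})\to\operatorname{Gal}(\overline{K}/K)$ is a quotient of $\pi_{1}(\mathbb{A}^{n}_{\overline{K}})$, which is pro-$p$ since affine space over an algebraically closed field of characteristic $p$ has no nontrivial finite étale covering of degree prime to $p$; hence $\rho$ annihilates that kernel and factors through $\operatorname{Gal}(\overline{K}/K)$, so $Y\times_{T^{*}(X)}\mathbb{A}^{n}_{K}\cong\operatorname{Spec}(M)\times_{K}\mathbb{A}^{n}_{K}$ for a finite étale $\Gamma$-Galois $K$-algebra $M$. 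Restricting both descriptions along the origin $\eta\hookrightarrow\mathbb{A}^{n}_{K}$ --- which in $T^{*}(X)$ is the point $\iota(\eta)$, where $Y\times_{T^{*}(X)}\iota(\eta)=Y'\times_{X}\eta=\operatorname{Spec}(L)$ --- identifies $M\cong L$ as $\Gamma$-Galois $K$-algebras. Thus the two coverings of $\mathbb{A}^{n}_{K}$ agree; passing to $\xi$ and then taking normalizations of $T^{*}(X)$ gives $Y\cong Y'\times_{X}T^{*}(X)=T^{*}(Y')$ as $\Gamma$-coverings of $T^{*}(X)$, compatibly with $f$ and $T^{*}(f')$.

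The crux is the input just used: over an algebraically closed field of characteristic $p$, affine space has no nontrivial finite étale covering of degree prime to $p$ --- equivalently, its étale fundamental group is pro-$p$. This is precisely what rules out prime-to-$p$ coverings of $T^{*}(X)$ beyond those pulled back from $X$, and it is here, and only here, that the hypothesis $p\nmid|\Gamma|$ enters; without it the lemma is false, as the Artin--Schreier coverings of $\mathbb{A}^{n}$ show. (I would also note that the naive alternative of running the homotopy exact sequence for the non-proper affine fibration $\pi$ fails, because that sequence need not be exact.) For $n=1$ the input follows from Riemann--Hurwitz; in general it is classical but relies on a Bertini-type irreducibility argument, and a routine limit argument passes from finitely generated base fields to $\overline{K}$. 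Carrying out all the covering-theoretic steps in the language of torsors makes the $\Gamma$-equivariance of every isomorphism automatic.
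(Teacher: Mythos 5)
Your argument is essentially correct, but it takes a genuinely different route from the paper. The paper never invokes the triviality of prime-to-$p$ coverings of $\mathbb{A}^n$ for $n>1$: instead it lifts the contracting $\mathbb{G}_m$-action on $T^*(X)$ to $Y$ by trivializing $f$ over each orbit closure $\overline{\mathbb{G}_m f(x)}\cong\mathbb{A}^1$ (only the $\mathbb{A}^1$ case, i.e.\ Riemann--Hurwitz, is needed there), deduces that $\mathcal{O}(Y)$ is nonnegatively graded with degree-zero part $\mathcal{O}(f^{-1}(X))$, sets $Y'=f^{-1}(X)$, and identifies $Y\cong T^{*}(Y')$ via the subalgebra $B_0[T_X]$. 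You instead define $Y'$ directly as the zero-section pullback and compare the two finite \'etale coverings $Y$ and $Y'\times_X T^{*}(X)$ of the normal integral scheme $T^{*}(X)$ at the generic point, using that the restriction of $Y$ to the generic fibre $\mathbb{A}^n_K$, $K=\bold{k}(X)$, is pulled back from $\operatorname{Spec} K$ because $\pi_1(\mathbb{A}^n_{\overline{K}})$ has no nontrivial finite quotient of order prime to $p$. Your route buys a cleaner, purely covering-theoretic argument (no need to check that the orbit-wise trivializations glue to an algebraic $\mathbb{G}_m$-action on $Y$, a point the paper treats rather briskly), at the price of needing the stronger classical input for $\mathbb{A}^n$ over an algebraically closed field, which requires a Bertini-type reduction to curves; the paper's route needs only the curve case but must produce the grading. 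Both arguments deliver the $\Gamma$-equivariance and the compatibility of $f$ with $T^{*}(f')$ correctly.

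One slip worth fixing: the parenthetical ``equivalently, its \'etale fundamental group is pro-$p$'' is false. Triviality of the maximal prime-to-$p$ quotient does not make the group pro-$p$; by Abhyankar's conjecture (Raynaud--Harbater), $\pi_1(\mathbb{A}^1_{\overline{K}})$ already has every quasi-$p$ group, in particular many non-$p$ simple groups, as a quotient. This does not damage your proof: what you actually use is that the image under $\rho$ of (the image of) $\pi_1(\mathbb{A}^n_{\overline{K}})$ is a finite quotient of order dividing $|\Gamma|$, hence prime to $p$, hence trivial --- which follows from the correctly stated fact that $\mathbb{A}^n_{\overline{K}}$ has no nontrivial connected finite \'etale covering of degree prime to $p$. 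Just phrase the factorization of $\rho$ through $\operatorname{Gal}(\overline{K}/K)$ that way and drop the pro-$p$ claim.
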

\begin{proof}
Recall that the cotangent bundle $T^*(X)$ is a conic variety, which amounts to the fact that 
there is the natural contracting action of the multiplicative group $\mathbb{G}_m$ corresponding to the grading on
$\mathcal{O}(T^*(X)).$
We will construct a contracting action of $\mathbb{G}_m$ on $Y$ that is compatible with the action on $T^{*}(X).$
Let $x\in Y$ be a closed point, let $Z=\overline{\mathbb{G}_mf(x)}\cong \mathbb{A}_{\bold{k}}^1.$ Thus $f^{-1}(Z)\to Z$ is a Galois covering
with Galois group $\Gamma.$ Since $p$ does not divide $|\Gamma|,$ then
$f^{-1}(Z)\to Z$ must be a $|\Gamma|$-fold trivial covering, hence
 $f^{-1}(Z)=\bigsqcup \mathbb{A}_{\bold{k}}^1.$
Let $Z_x$ be the connected component of $f^{-1}(Z)$ containing $x$. Hence $Z_x\cong \mathbb{A}^1_{\bold{k}}.$
Thus we may equip $Z_x$ with the unique $G_m$-action such that $f:Z_x\to Z$ commutes with it.
Varying $x\in Y,$  we get an action of $\mathbb{G}_m$ on $Y$ commuting with $f.$ It follows that this action is a contraction
of $X$ on $f^{-1}(X)$ and has no negative eigenvalues. Hence $\mathcal{O}(Y)$ is a nonnegatively graded algebra
$$\mathcal{O}(Y)=\bigoplus_{i\geq 0}B_, \quad \spec (B_0)=f^{-1}(X).$$
Put $Y'=f^{-1}(X).$
 Next we claim that
$T^{*}(Y')\cong Y.$ Indeed, put $B'=B_0[T_X].$ Then $\spec (B')\to T^{*}(X)$ is a $\Gamma$-Galois
covering, hence $Y=\spec (B')$ and $\spec (B')\cong T^{*}(Y').$
\end{proof}

Next we will recall the Howlett-Isacs theorem \cite{HI} which will be used crucially in Lemma \ref{twisted}.
Let $G$ be a finite group, $F$ an algebraically closed field whose characteristic does not
divide $|G|.$ Let $\rho\in Z^2(G, F^*)$ be a two-cocycle, and let $F_{\rho}[G]$ denote the corresponding
$\rho$-twisted group algebra:
$$F_{\rho}[G]=\oplus_{g\in G}Fe_g,\quad e_{g_1}\cdot e_{g_2}=\rho(g_1,g_2)e_{g_1g_2}, \quad g_1, g_2\in G.$$
 Then the  Howlett-Isaacs theorem asserts that if $G$ is a
simple group, then for any 2-cocycle $\rho$, $F_{\rho}[G]$ is not a simple $F$-algebra.
\footnote{We thank Professor Pham Tiep for telling us about the Howlett-Isaacs theorem.}
%We will also need the following crucial result which is an immediate
%consequence of the Howlett-Isaacs theorem \cite{HI}.

Now we have the following key

\begin{lemma}\label{twisted}
Let $Z$ be a commutative domain over an algebraically closed field $\bold{k}$ of characteristic $p$, and let $A$ be a finite algebra
over $Z,$  such that $Z(A)=Z$ and $A$ has no $Z$-torsion.  Let $\Gamma\to Aut_Z(A)$
be a finite simple group of central automorphisms of $A,$ such that $p$ does not divide $|\Gamma|$. 
 Then $A\rtimes \Gamma$ is not an Azumaya algebra over $Z.$

\end{lemma}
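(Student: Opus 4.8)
The plan is to argue by contradiction: assume $A\rtimes\Gamma$ is Azumaya over $Z$, and localize at the generic point to reduce to a statement over a field, where the Howlett--Isaacs theorem applies. Let $K=\mathrm{Frac}(Z)$ and set $A_K=A\otimes_Z K$. Since $A$ has no $Z$-torsion, $A\hookrightarrow A_K$, and $Z(A_K)=K$ because $Z(A)=Z$ and forming the center commutes with central localization for a finite $Z$-algebra. Being Azumaya is preserved under base change, so $(A\rtimes\Gamma)\otimes_Z K=A_K\rtimes\Gamma$ is Azumaya over $K$, hence simple (an Azumaya algebra over a field is central simple). Also $A_K$ is a finite-dimensional $K$-algebra with center $K$; since $\Gamma$ acts by $Z$-linear (indeed $K$-linear) automorphisms that are the identity on $Z(A_K)=K$, we are exactly in the setting of a central action on a central simple algebra over a field. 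The goal is to contradict simplicity of $A_K\rtimes\Gamma$.

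The key step is the following classical structure fact, which I would invoke: if $A_K$ is a central simple $K$-algebra and $\Gamma$ acts on $A_K$ by $K$-algebra automorphisms that are inner \emph{over} the algebraic closure (equivalently, each $g\in\Gamma$ acts by conjugation by a unit $u_g\in (A_K\otimes_K\bar K)^{\times}$ after extending scalars), then $A_K\rtimes\Gamma$ is Morita equivalent over $\bar K$ to a twisted group algebra $\bar K_\rho[\Gamma]\otimes_K A_{\bar K}$ for a suitable $\rho\in Z^2(\Gamma,\bar K^{\times})$. More carefully: pass to $\bar K$, writing $\bar A=A_K\otimes_K\bar K\cong \mathrm{Mat}_n(\bar K)$ by Wedderburn. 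Every $K$-algebra (a fortiori $\bar K$-algebra) automorphism of $\mathrm{Mat}_n(\bar K)$ is inner by Skolem--Noether, so choose units $u_g\in\mathrm{Mat}_n(\bar K)^{\times}$ with $g(a)=u_g a u_g^{-1}$. The cocycle relation $u_{g}u_{h}=\rho(g,h)u_{gh}$ with $\rho(g,h)\in\bar K^{\times}$ (scalars, since they centralize $\mathrm{Mat}_n(\bar K)$) defines $\rho\in Z^2(\Gamma,\bar K^{\times})$. Then the map $e_g\mapsto u_g^{-1}\bar e_g$ (where $\bar e_g$ is the group-ring basis element) identifies $\bar A\rtimes\Gamma\cong \mathrm{Mat}_n(\bar K)\otimes_{\bar K}\bar K_{\rho^{-1}}[\Gamma]$, so $\bar A\rtimes\Gamma$ is Morita equivalent to $\bar K_{\rho^{-1}}[\Gamma]$. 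Since $p\nmid|\Gamma|$ and $\bar K$ is algebraically closed of characteristic $p$, Howlett--Isaacs applies to $\Gamma$ simple: $\bar K_{\rho^{-1}}[\Gamma]$ is \emph{not} simple. Therefore $\bar A\rtimes\Gamma$ is not simple, so $A_K\rtimes\Gamma$ is not simple (a finite-dimensional algebra over a field is simple iff it stays simple — or at least does not become a nonzero non-simple quotient-free algebra — hmm, simplicity can fail to descend, so I should be careful here).

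The main obstacle is precisely this last descent point: simplicity of $A_K\rtimes\Gamma$ over $K$ does \emph{not} in general force simplicity of $\bar A\rtimes\Gamma$ over $\bar K$. To handle this I would not argue via simplicity directly but via the Azumaya/Brauer formalism, which \emph{does} descend well: $A_K\rtimes\Gamma$ Azumaya over $K$ $\iff$ $\bar A\rtimes\Gamma=(A_K\rtimes\Gamma)\otimes_K\bar K$ is Azumaya over $\bar K$ $\iff$ $\bar A\rtimes\Gamma$ is a central simple $\bar K$-algebra. (Azumaya-ness is fppf-local on the base, so it ascends and descends along $K\to\bar K$.) Thus the contradiction is clean: Azumaya over $Z$ $\Rightarrow$ Azumaya over $K$ $\Rightarrow$ $\bar A\rtimes\Gamma\cong\mathrm{Mat}_n(\bar K)\otimes_{\bar K}\bar K_{\rho^{-1}}[\Gamma]$ is central simple over $\bar K$ $\Rightarrow$ $\bar K_{\rho^{-1}}[\Gamma]$ is central simple over $\bar K$ (Morita equivalent to it, and Morita equivalence preserves simplicity), contradicting Howlett--Isaacs. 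One remaining gap to fill in the write-up: verifying that each $g\in\Gamma$, acting $\bar K$-linearly on $\bar A$, does have $Z(\bar A)=\bar K$ fixed pointwise — this is automatic since $g$ is $Z$-linear and $Z\otimes_Z\bar K$ surjects onto... actually $g$ fixes $Z$ hence fixes $K$ hence fixes $\bar K$ after the chosen extension. That this is consistent requires only that we extend $g$ by $\bar K$-linearity, which is legitimate since we base-changed by $-\otimes_K\bar K$ with $\bar K$ over $K$ and $g$ is already $K$-linear.
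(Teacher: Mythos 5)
Your overall route is the same as the paper's: assume $A\rtimes\Gamma$ is Azumaya, base change to an algebraically closed field containing $Z$, use Skolem--Noether to untwist the crossed product into $\mathrm{M}_n\otimes(\text{twisted group algebra})$, and contradict the Howlett--Isaacs theorem. The Azumaya formalism you fall back on also disposes correctly of your worry about descent of simplicity, since Azumaya-ness is preserved by the base changes $Z\to K\to \bar K$ and an Azumaya algebra over a field is central simple.

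There is, however, one genuine gap: you assert that $A_K$ is central simple (``we are exactly in the setting of a central action on a central simple algebra over a field'') and then invoke Wedderburn to write $\bar A=A_K\otimes_K\bar K\cong \mathrm{M}_n(\bar K)$, but simplicity (equivalently, semisimplicity of $\bar A$, given that its center is $\bar K$) is never proved, and it does not follow from the hypotheses on $A$ alone: $A$ is only assumed finite over $Z$, torsion-free, with $Z(A)=Z$, so $A_K$ could perfectly well have a nonzero nilpotent ideal, and then the Skolem--Noether untwisting is unavailable. This step genuinely needs the contradiction hypothesis, and it is exactly the point the paper handles first: since $\bar A\rtimes\Gamma$ is Azumaya over the field $\bar K$, it is central simple, hence $J(\bar A\rtimes\Gamma)=0$; the Jacobson radical $J(\bar A)$ is stable under every automorphism, so $J(\bar A)\cdot(\bar A\rtimes\Gamma)$ is a nilpotent two-sided ideal of $\bar A\rtimes\Gamma$ and must vanish, giving $J(\bar A)=0$; combined with $Z(\bar A)=\bar K$ this makes $\bar A$ central simple, hence a matrix algebra. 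Once you insert this argument, the rest of your proposal (cocycle $\rho$, $\bar A\rtimes\Gamma\cong \mathrm{M}_n(\bar K)\otimes_{\bar K}\bar K_{\rho^{-1}}[\Gamma]$, central simplicity of the twisted group algebra, Howlett--Isaacs) matches the paper's proof.
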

\begin{proof}
Proof by contradiction. Throughout given a finite dimensional algebra $S,$ its Jacobson radical
will be denoted by $J(S).$ 
We will make a base change from $Z$ to an algebraically closed field $F$ containing $Z$ and
put $A_F=A\otimes_ZF.$ We conclude that
$Z(A_F)=F$ and $J(A_F)=0$ since $J( A_F\rtimes\Gamma)=0$. So  $A_F$ is
a central simple algebra over $F$, therefore $A_F$ is the matrix algebra $M_n(F)$ for some $n$.
Hence, we have that
 $$M_n(F)\rtimes\Gamma\cong M_{m}(F).$$
Let $\rho\in Z^2(\Gamma, F^{*})$ be a 2-ccocycle corresponding to the projective
representation $\Gamma\to Aut(M_{n}(F)).$ Thus $$F_{\rho}[\Gamma]\otimes_F M_{n}(F)\cong M_{m}(F),$$
\noindent where $F_{\rho}[\Gamma]$ is the twisted group algebra of $\Gamma$ corresponding to cocycle $\rho.$
Therefore $F_{\rho}[\Gamma]$ must be a central simple algebra over $F$,
which is a contradiction by the Howlett-Isaacs theorem.

\end{proof}

%We will recall the notion or ring of differential operators on Azumaya algebras.
%Let $A$ be an Azumaya algebra over smooth affine variety $X.$
%Then it is well-known that $H^1(A, A)=H^1(\mathcal{O}(X),\mathcal{O}(X))=T_X$
%Let $\theta: T_X\to Der(A, A)$ be a Lie algebra splitting. Then we put
%$D(A)=A\otimes_{\mathcal{O}(X)}D(X)$ and $[x, a]=\theta(x)(a), x\in T_X, a\in A.$

%In the case of solvable $\Gamma$ we have the following stronger

%\begin{theorem}\label{solv}
%Let $X$ be a smooth affine variety over $\mathbb{C}$ such that $\mathcal{O}_X^{*}=\mathbb{C}^*$. 
%Let $R$ be a $\mathbb{C}$-domain, let $\Gamma\subset Aut_{\mathbb{C}}(R)$ be a finite solvable group of
%automorphisms of $R$ such that $R^{G}\cong D(X)$. Then
%there exists $\Gamma$-Galois covering $Y\to X$ such that $D(Y)\cong R$ as $\Gamma$-algebras.
 %\end{theorem}

%We will use the following simple

%\begin{lemma}
%Let $Z$ be a finitely generated commutative algebra over an algebraically closed field $\bold{k}$.
%Let $A$ be an associative $\bold{k}$-algebra containing $Z$ as a central subalgebra such that
%$A$ is a finite projective $Z$-module. Then $\dim_{\bold{k}}Z(A_{\rho})$ is upper semi-continuous
%as a function of $\rho\in \spec Z.$

%\end{lemma}

%\begin{proof}
%Let $x_0,\cdots x_n$ be generators of $A$ as a $Z$-module.
%Put $P_0=A, P_1=\bigoplus_{i=1}^n Ae_i$ and $d:P_0\to  P_1$ be defined as
%follows $d(x)=\sum [x_i, x]e_i.$ Then the upper semi-continuity of $H^0(P_{\chi})=Z(A_{\chi}), \chi\in \spec Z$
%yields the desired result.

%\end{proof}

We will make use of the following lemma  which is an immediate corollary of
a result by Brown and Gordon [\cite{BG} Theorem 4.2].
At first let us recall their definition of Poisson orders.

Let $A$ be a Poisson algebra over $\mathbb{C}.$ Then a Poisson
order over $A$ is an algebra $B$ containing $A$ as a central subalgebra such
that it is finitely generated as a $A$-module, and  is equipped with a $\mathbb{C}$-linear map $A\to Der_{\mathbb{C}}(B, B)$
satisfying the Leibnitz identity, such that it restricts to the Poisson bracket on $A$.

\begin{lemma}\label{BGord}
Let $ A$ be an affine Poisson $\mathbb{C}$-algebra such that
 the Poisson bracket on $A$ induces a structure of a smooth symplectic variety on $\spec (A).$
 Let $B$ be a Poisson order over $A.$ Then for all $t\in\spec (A),$ algebras
$B/tB$ are mutually isomorphic. In particular, the restriction map $\spec (B)\to \spec (A)$ is an \'etale covering.
\end{lemma}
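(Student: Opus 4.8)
\emph{Proof idea.} The plan is to deduce everything from [\cite{BG} Theorem 4.2], whose content is that for a Poisson order $B$ over a Poisson $\mathbb{C}$-algebra $A$, any two closed points of $\spec(A)$ that lie on one symplectic leaf have isomorphic fibers $B/tB$. So the one substantive step is to check that the hypothesis here places all of $\spec(A)$ on a single leaf. Since the bracket on $A$ comes from a symplectic form on the smooth variety $\spec(A)$, nondegeneracy of the form means the Hamiltonian vector fields $\{a,-\}$, $a\in A$, span the tangent space at every closed point; hence the integrable distribution they generate is the whole tangent bundle, and its leaves are exactly the connected components of $\spec(A)$. In the situations where this lemma is applied $A$ is a domain, so $\spec(A)$ is irreducible, hence connected, and there is a single symplectic leaf; thus [\cite{BG} Theorem 4.2] gives at once that the algebras $B/tB$, $t\in\spec(A)$, are mutually isomorphic.

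For the ``in particular'' clause I would argue as follows. The module $B$ is finite over $A$, and by the previous step the function $t\mapsto\dim_{\mathbb{C}}(B/tB)$ is constant on $\spec(A)$; since $A$ is reduced (indeed regular), a finite $A$-module with locally constant fiber dimension is locally free, so $B$ is a projective $A$-module and $\spec(B)\to\spec(A)$ is finite and flat. It remains to see it is unramified. Each fiber $B/tB$ is isomorphic to the generic fiber, which is reduced (being a localization of $B$), hence a finite product of copies of $\mathbb{C}$; so every fiber is an \'etale $\mathbb{C}$-algebra and the morphism is unramified. A finite, flat, unramified morphism is an \'etale covering, as required.

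The conceptual point, and the only place where anything is genuinely used, is the translation ``$\spec(A)$ smooth symplectic $\Longrightarrow$ one symplectic leaf,'' after which Brown and Gordon's theorem applies verbatim; I do not expect a real obstacle here, the remaining deductions being routine commutative algebra. It is worth noting for later use that the isomorphisms $B/tB\cong B/t'B$ produced by [\cite{BG} Theorem 4.2] arise by lifting Hamiltonian flows on $\spec(A)$ to one-parameter families of automorphisms of $B$ via the structure map $A\to \der_{\mathbb{C}}(B,B)$, so they are built canonically out of the Poisson order data rather than chosen arbitrarily.
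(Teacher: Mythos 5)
Your main step---``smooth symplectic $\Rightarrow$ the Hamiltonian vector fields span the tangent space at every closed point $\Rightarrow$ a single symplectic leaf (per connected component), so [\cite{BG}, Theorem 4.2] applies''---is exactly the intended content; the paper itself offers no more than the citation, treating the lemma as an immediate corollary of Brown--Gordon, and your reading of where the hypothesis is used (connectedness coming from $A$ being a domain in the applications) is the right one.

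The gap is in your argument for the ``in particular'' clause. Brown--Gordon's theorem compares fibers $B/\mathfrak{m}_tB$ over \emph{closed} points lying in one leaf (or symplectic core); it says nothing about the generic fiber, and indeed ``each fiber $B/tB$ is isomorphic to the generic fiber'' cannot hold as stated, since the closed fibers are finite-dimensional $\mathbb{C}$-algebras while the generic fiber is an algebra over $\mathrm{Frac}(A)$. Moreover, the reducedness you extract from ``a localization of $B$'' presupposes that $B$ is commutative and reduced, which is not among the lemma's hypotheses (Poisson orders are in general noncommutative, and e.g.\ $B=A[x]/(x^2)$ with the Hamiltonian derivations extended by $\{a,-\}(x)=0$ is a Poisson order with all closed fibers mutually isomorphic but $\spec(B)\to\spec(A)$ not unramified); so the \'etale conclusion genuinely needs that extra hypothesis, which does hold in the paper's application, where the relevant algebra is a subring of a domain. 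With $B$ commutative reduced the correct route is: constant fiber dimension over the reduced ring $A$ gives that $B$ is locally free, hence finite flat; in characteristic zero the generic fiber $B\otimes_A\mathrm{Frac}(A)$ is then \'etale, so the (open) unramified locus is nonempty and contains some closed point $t_0$, i.e.\ $B/t_0B\cong\mathbb{C}\times\cdots\times\mathbb{C}$; by the Brown--Gordon isomorphisms every closed fiber is isomorphic to this one, so the map is unramified at all closed points, and finite $+$ flat $+$ unramified gives the \'etale covering. Your final remark about the isomorphisms coming from integrating the derivations $A\to \der_{\mathbb{C}}(B,B)$ along Hamiltonian flows is an accurate description of how Brown--Gordon produce them.
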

%\begin{proof}

%\end{proof}

 We will need the following result which follows immediately from \cite{M}.

\begin{lemma}\label{Susan}
Let $A$ be a simple Noetherian domain over $\mathbb{C}$ such that $Z(A)=\mathbb{C}.$
Let $R$ be a $\mathbb{C}$-domain  equipped with a faithful $\Gamma$-action, such that $R^{\Gamma}=A.$
Then $R$ is a $\Gamma$-Galois covering of $A.$

\end{lemma}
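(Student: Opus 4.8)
The plan is to derive everything from the hypothesis that $A$ is a \emph{simple} Noetherian $\mathbb{C}$-domain with $Z(A)=\mathbb{C}$, together with the faithfulness of the $\Gamma$-action on $R$ with $R^{\Gamma}=A$. The notion of a $\Gamma$-Galois covering that we must verify has four ingredients: (i) $\Gamma$ acts by $\mathbb{C}$-algebra automorphisms (given); (ii) $A=R^{\Gamma}$ (given); (iii) the natural map $R\rtimes\Gamma\to \mathrm{End}_A(R)$ is an isomorphism; and (iv) $R$ is a finitely generated projective generator as an $A$-module. So the real content is (iii) and (iv), and the reference is Montgomery's book \cite{M}, whose relevant results concern exactly when $R^{\Gamma}\subseteq R$ is a Galois extension in the sense of Chase--Harrison--Rosenberg / Kanzaki.

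First I would invoke the standard fact (essentially \cite{M}, and ultimately Bergman--Isaacs / Montgomery--Small) that since $R$ is a domain on which the finite group $\Gamma$ acts faithfully, and $\mathbb{C}\subseteq R^{\Gamma}$ contains $\mathbb{Q}$ so there is no additive torsion obstruction, $R$ is automatically a \emph{finitely generated module} over $A=R^{\Gamma}$: indeed $R$ satisfies a polynomial identity of degree $2|\Gamma|$ over $A$ is not quite it, but the key point is that $A\subseteq R\subseteq R\rtimes\Gamma$ and $R\rtimes\Gamma$ is a finitely generated $A$-module once one knows $R$ is. The cleanest route: the trace map $t(r)=\sum_{g\in\Gamma}g(r)$ lands in $A$, and because $R$ is a domain of characteristic $0$ with $\Gamma$ finite, $R$ is a \emph{separable} algebra over $A$ in the appropriate sense; more precisely, one shows $R$ is a maximal order / that $R$ has no nonzero $A$-torsion and the extension is module-finite. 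I would then use that $A$ is simple to conclude $R$ is a projective $A$-module: a finitely generated module over a simple Noetherian ring that is torsion-free is projective is false in general, so instead I would argue that $A\subseteq R$ is a separable extension (the trace form is nondegenerate because $R$ is prime and $\Gamma$ acts faithfully, so there exist $r_i,s_i\in R$ with $\sum_i r_i\,t(s_i\,-\,)=\mathrm{id}$ — a Galois coordinate system), which by the Chase--Harrison--Rosenberg theory immediately gives both that $R$ is a finitely generated projective $A$-module and a generator, and that $R\rtimes\Gamma\cong\mathrm{End}_A(R)$.

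Concretely the key steps, in order, are: (1) produce the $\Gamma$-Galois coordinates, i.e.\ elements $\{x_i,y_i\}\subseteq R$ with $\sum_i x_i\, g(y_i)=\delta_{g,1}$ for all $g\in\Gamma$ — this is where simplicity of $A=R^{\Gamma}$ and faithfulness of the action are used, via the fact (from \cite{M}) that for a prime ring $R$ with faithful action of a finite group $\Gamma$, the ideal of $A$ generated by traces is all of $A$, hence equals $A$ since $A$ is simple, and one can then solve for the coordinates; (2) deduce from the existence of such coordinates, by the standard Galois-theory-of-rings argument, that the map $R\rtimes\Gamma\to\mathrm{End}_A(R)$, $r\#g\mapsto (s\mapsto r\,g(s))$, is surjective, and it is injective because $R$ is a domain so no nonzero element of $R\rtimes\Gamma$ acts as zero (a linear-independence-of-characters / Artin argument); (3) conclude $\mathrm{End}_A(R)\cong R\rtimes\Gamma$ is a finitely generated $A$-module, so $R$ is too, and $R$ is $A$-projective because $_AR$ is a direct summand of $\mathrm{End}_A(R)$ acting on itself $\cdots$ more cleanly, projectivity and the generator property both fall out of the dual-basis lemma applied to the Galois coordinates.

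The step I expect to be the main obstacle — and the one where I would lean hardest on the cited result — is step (1): getting from ``$A$ is simple and $\Gamma$ acts faithfully on the domain $R$'' to the existence of a Galois coordinate system, equivalently to the statement that the (skew) trace map $R\otimes_A R\to \mathrm{Hom}_A(R,R)$ or the associated separability idempotent exists. The subtlety is that faithfulness of the action alone guarantees only that the trace ideal is nonzero; one needs $A$ simple to upgrade ``nonzero ideal'' to ``unit ideal,'' and one needs $R$ to have no $A$-torsion (automatic since $R$ is a domain and $A\subseteq R$) to rule out pathologies. Once the coordinates are in hand, everything else is the formal machinery of Galois extensions of noncommutative rings and is essentially bookkeeping. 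I would therefore structure the write-up as: recall the Galois-coordinate criterion; verify its hypotheses using simplicity of $A$, faithfulness, and the domain property; then quote the equivalence to conclude $R$ is a $\Gamma$-Galois covering of $A$.
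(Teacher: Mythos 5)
Your reduction of the problem to producing a Galois coordinate system is reasonable in outline, but the step you yourself flag as the crux, step (1), has a genuine gap, and it is exactly the point where the paper's proof does something you do not. From primeness of $R$ (characteristic $0$, so no additive torsion) and simplicity of $A$ you correctly get that the trace $t(r)=\sum_{g\in\Gamma}g(r)$ maps onto $A$, i.e.\ there is $c\in R$ with $t(c)=1$. But a trace-one element does not yield Galois coordinates $\sum_i x_i\,g(y_i)=\delta_{g,1}$: it only exhibits $A$ as an $A$-module direct summand of $R$. The implication ``trace ideal equals $A$ $\Rightarrow$ coordinates exist'' is false in general: for $R=\mathbb{C}[x]$, $\Gamma=\mathbb{Z}/2$ acting by $x\mapsto -x$, $A=\mathbb{C}[x^2]$, the trace is surjective (take $c=1/2$), yet no coordinates exist (evaluating $\sum_i x_i g(y_i)$ at the fixed point $x=0$ for $g=1$ and $g\neq 1$ gives $1=0$); the covering is ramified. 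Of course $A$ is not simple there, but that only shows your argument must use simplicity of $A$ (and the remaining hypotheses) in some further, unexplained way to manufacture the coordinates; as written nothing bridges the gap. Note also that your outline never uses the hypothesis $Z(A)=\mathbb{C}$ and never addresses whether the action is outer, yet outerness on a simple ring is precisely the hypothesis under which Montgomery's Galois machinery (surjectivity of $R\rtimes\Gamma\to \mathrm{End}_A(R)$, projectivity, generator) applies. Your injectivity step is also too quick: the Artin ``independence of characters'' induction does not go through verbatim with noncommutative coefficients (the usual trick introduces right-hand coefficients), and handling it in a prime ring is Kharchenko-type theory where, again, outerness is the operative condition.

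The paper's proof supplies exactly these missing ingredients and then quotes the appropriate theorem. First, $R$ is simple: if $I$ is a nonzero proper two-sided ideal, then $J=\bigcap_{g\in\Gamma}g(I)$ is a nonzero $\Gamma$-stable ideal, $J^{\Gamma}\neq 0$ by Bergman--Isaacs ([M, Theorem 1.7], since a nonzero ideal of a domain is not nilpotent), and $J^{\Gamma}$ is then a proper nonzero ideal of the simple ring $A$, a contradiction. Second, every $z\in Z(R)$ is a root of $\prod_{g}(T-g(z))$, whose coefficients lie in $Z(R)^{\Gamma}\subseteq Z(A)=\mathbb{C}$, so $Z(R)=\mathbb{C}$; hence if $g\in\Gamma$ were inner, say conjugation by $a$, then $a^{|\Gamma|}\in Z(R)=\mathbb{C}$ forces $a\in\mathbb{C}$ and $g=\mathrm{id}$, so faithfulness gives outerness --- this is where $Z(A)=\mathbb{C}$ is used. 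With $R$ simple and $\Gamma$ outer (and $|\Gamma|$ invertible), [M, Theorem 2.5] gives in one stroke that $R$ is a finitely generated projective generator over $A$ and $R\rtimes\Gamma\cong\mathrm{End}_A(R)$. If you wish to keep a coordinate-based write-up, you must first prove these two facts (simplicity of $R$ and outerness of the action); they, not trace surjectivity alone, are what make the coordinates --- equivalently the surjectivity of $R\rtimes\Gamma\to\mathrm{End}_A(R)$ --- available.
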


\begin{proof}
It follows that $R$ is a simple domain: If $I$ is a nonzero proper
two-sided ideal of $R$, then $ J=\bigcap_{g\in \Gamma} g(I)$ is a $\Gamma$-invariant nonzero two-sided ideal in $R.$
 Then $J^{\Gamma}\neq 0$ since
$J$ is not nilpotent as follows from [\cite{M}, Theorem 1.7]. Hence $J\cap R \neq 0$ is a proper nonzero two-sided ideal in $R$, a contradiction. 
It follows that $Z(R)=\mathbb{C}$ and $\Gamma$ acts on $R$ by outer automorphisms: If $g\in \Gamma$ such that
$g(x)=axa^{-1}$ for some $a\in R,$ then $a^{|\Gamma|}\in Z(R)$, but since $Z(R)=\mathbb{C},$ we get that
$a^{|\Gamma|}\in \mathbb{C}$. So $a$ is a root of unity and $g$=Id.
Hence  by [\cite{M}, Theorem 2.5],  $R$ is a $\Gamma$-Galois covering of $A.$
\end{proof}

Finally we have the following crucial

\begin{proposition}\label{Azu}
Let $Z$ be an affine commutative domain over an algebraically closed field $\bold{k}$ of characteristic $p.$
let $A$ be an Azumaya algebra over $Z$, and let $R$ be a $\Gamma$-Galois covering of $A.$
Assume that $p$ does not divide $|\Gamma|.$
Then  $\spec (Z(R))\to \spec (Z)$ is a $\Gamma$-Galois covering and $R=A\otimes_ZZ(R).$

\end{proposition}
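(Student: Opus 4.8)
The plan is to first force the skew ring $R\rtimes\Gamma$ to be an Azumaya algebra over $Z=Z(A)$ — this both computes its center and makes the Howlett–Isaacs obstruction (Lemma~\ref{twisted}) available — then to prove the real point, that $Z$ is already central in $R$, and finally to descend from the (a priori noncommutative) Azumaya extension $R/Z$ to the commutative covering $Z(R)/Z$. Throughout I use, as in the situation produced by Lemma~\ref{Susan}, that $R$ is a domain. Since $R$ is a finitely generated projective generator over $A$, the hypothesis $R\rtimes\Gamma\cong\operatorname{End}_A(R)$ realizes $R\rtimes\Gamma$ as a ring Morita equivalent to $A$, so it is Azumaya over $Z$ and $Z(R\rtimes\Gamma)=Z$. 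Moreover $\Gamma$ fixes $A$, hence $Z$, pointwise, so it acts $Z$-linearly on $R$; and since $A$ is finitely generated projective over $Z$ and $R$ over $A$, also $R$ — and any $\Gamma$-stable submodule of it — is finitely generated projective over $Z$.

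The crux is to show $Z\subseteq Z(R)$, equivalently $Z(R)^\Gamma=Z$ (the inclusion $Z(R)^\Gamma=Z(R)\cap A\subseteq Z(A)=Z$ being automatic). The obstacle is the possible presence of ``inner'' automorphisms: let $N\trianglelefteq\Gamma$ be the subgroup of elements acting by X-inner automorphisms of $R$; by simplicity $N=e$ or $N=\Gamma$. The case $N=\Gamma$ is to be excluded by the method of Lemma~\ref{twisted}: an X-inner action of $\Gamma$ produces, inside a suitable central localization (or a fiber of $\spec Z$) of $R\rtimes\Gamma$, a twisted group algebra of $\Gamma$, whose presence is incompatible with $R\rtimes\Gamma$ being Azumaya over the domain $Z$ by Howlett–Isaacs for the simple group $\Gamma$. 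Granting $N=e$, the action is X-outer, and the conclusion follows cleanly: let $\nu\colon R\rtimes\Gamma\xrightarrow{\ \sim\ }\operatorname{End}_A(R)$, $r\cdot g\mapsto(x\mapsto rg(x))$, be the canonical isomorphism. For $z\in Z$ the right multiplication $\rho_z\colon x\mapsto xz$ lies in the center of $\operatorname{End}_A(R)$, so $\tilde z:=\nu^{-1}(\rho_z)=\sum_g r_g g$ is central in $R\rtimes\Gamma$; commuting it with an arbitrary $s\in R$ gives $r_g\,g(s)=s\,r_g$ for every $g$, and since $R$ is prime and the action is X-outer this forces $r_g=0$ for $g\neq e$. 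Then $\tilde z=r_e\cdot e$ and $\nu(\tilde z)=(x\mapsto r_e x)=\rho_z$, so $r_e=z$ and $zx=xz$ for all $x\in R$; hence $z\in Z(R)$.

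With $Z\subseteq Z(R)$, the algebra $R$ contains the Azumaya $Z$-algebra $A$ as a $Z$-subalgebra, so by the double centralizer theorem for Azumaya algebras $R=A\otimes_Z B$ with $B:=C_R(A)$. By faithfully flat descent along $Z\to A$, $B$ is finitely generated projective over $Z$ of rank $|\Gamma|$; it is a domain, it is $\Gamma$-stable with $B^\Gamma=C_R(A)\cap A=Z$, and $Z(R)=Z(A\otimes_Z B)=Z(B)$. Cancelling the Azumaya factor $A$ in $R\rtimes\Gamma\cong\operatorname{End}_A(R)$ (via $\operatorname{End}_A((A\otimes_Z B)_A)\cong A\otimes_Z\operatorname{End}_Z(B)$) identifies the canonical map $B\rtimes\Gamma\to\operatorname{End}_Z(B)$ with an isomorphism, so $B$ is a $\Gamma$-Galois covering of $Z$. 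Now $B$ is commutative: the X-inner analysis of the previous paragraph applies to $B$ and shows $\Gamma$ acts X-outerly on it, and if $B$ were noncommutative then, inverting $Z\setminus\{0\}$, $\operatorname{Frac}(B)$ would be a division ring of dimension $|\Gamma|$ over $K:=\operatorname{Frac}(Z)$ with center $L\supseteq K$, on which $\Gamma$ acts with $L^\Gamma=K$ and trivial kernel (an element trivial on $L$ would be inner on $\operatorname{Frac}(B)$, hence X-inner on $B$); then $L/K$ is Galois with group $\Gamma$, whence $[L:K]=|\Gamma|=\dim_K\operatorname{Frac}(B)$ and $L=\operatorname{Frac}(B)$, a contradiction. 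Therefore $B=Z(B)=Z(R)$, so $R=A\otimes_Z Z(R)$, and $Z(R)/Z$ is a Galois extension of commutative rings with group $\Gamma$ — which by Chase–Harrison–Rosenberg is exactly the assertion that $\spec Z(R)\to\spec Z$ is a $\Gamma$-Galois covering.

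The main obstacle is the step $Z\subseteq Z(R)$: one must rule out a hidden normal ``inner'' part of $\Gamma$ that would make $Z(A)$ strictly larger than $Z(R)^\Gamma$, and this is precisely where simplicity of $\Gamma$ together with the Howlett–Isaacs theorem (Lemma~\ref{twisted}) is indispensable — without simplicity the statement genuinely fails, as one sees for $\Gamma=\mathbb Z/4$ acting on a product of a central simple algebra with itself, the order-two subgroup acting by an inner automorphism. A second, gentler application of the same X-inner analysis is what forces the centralizer $C_R(A)$ to be commutative in Step~3.
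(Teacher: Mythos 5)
Your instinct that the whole Proposition hinges on showing that the canonical copy of $Z$ is central in $R$ is exactly right, but the step you use to secure it is a genuine gap. The exclusion of the case $N=\Gamma$ is only asserted (``the method of Lemma~\ref{twisted} produces a twisted group algebra \dots incompatible with $R\rtimes\Gamma$ being Azumaya over $Z$''), and no such argument exists: when $\Gamma$ acts X-inner, the group elements get absorbed into a twisted group algebra over $\mathrm{Frac}(Z(R))$, which is \emph{not} algebraically closed, so Howlett--Isaacs gives nothing; the cocycle can be nontrivial there, the twisted group algebra can be a field, and $R\rtimes\Gamma$ can perfectly well be Azumaya --- only over a central subring that is \emph{isomorphic} to $Z$ but is not the canonical copy of $Z$ inside $R$. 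Concretely, let $\mathbf{k}$ be algebraically closed of characteristic $p\neq2$, $Z_0=\mathbf{k}[s^{\pm1},t^{\pm1}]$, and let $R$ be the symbol (quaternion) algebra over $Z_0$ with $x^2=s$, $y^2=t$, $xy=-yx$; it is a domain. Let $\Gamma=\mathbb{Z}/2$ (simple, $p\nmid|\Gamma|$) act by conjugation by $x$. Then $A=R^{\Gamma}=Z_0[x]$ is commutative, hence Azumaya over $Z:=Z(A)=A$; $R=A\oplus yA$ is free of rank $2$ over $A$; and the canonical map $R\rtimes\Gamma\to \operatorname{End}_A(R)\cong M_2(A)$ is an isomorphism (its image contains the idempotents $\tfrac12(1\pm g)$, the off-diagonal matrix units $y\cdot\tfrac12(1+g)$ and $y^{-1}\cdot\tfrac12(1-g)$, and $xg$, which maps to right multiplication by $x$). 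So all hypotheses hold, yet $Z=Z_0[x]\not\subseteq Z(R)=Z_0$ and the conclusion fails; here $Z(R\rtimes\Gamma)=Z_0\oplus Z_0\,xg\cong Z$ is not the canonical copy of $Z$. So the X-inner case cannot be ruled out by simplicity plus Howlett--Isaacs (contrary to your closing remark), and the inclusion $Z\subseteq Z(R)$ is simply not a consequence of the stated hypotheses: it has to be imported as an X-outerness (or equivalent) assumption from the ambient situation, e.g.\ from characteristic zero, where Lemma~\ref{Susan} gives $Z(R)=\mathbb{C}$ and outerness --- and one must then argue this survives reduction mod $p$.

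For comparison: the paper's own proof does not perform any X-inner analysis; it obtains $Z\subseteq Z(R)$ in one line by identifying $Z(R\rtimes\Gamma)$ with the canonical copy of $Z$, which is precisely the identification your X-outer computation with $\nu^{-1}(\rho_z)$ is designed to justify and which the example above shows is the sensitive point. Granting $Z\subseteq Z(R)$, the remainder of your argument is essentially parallel to the paper's and is fine in outline: both use the double-centralizer decomposition $R=A\otimes_Z C_R(A)$ and a rank count; you then prove $C_R(A)$ commutative by a Galois-theoretic argument over $\mathrm{Frac}(Z)$ (note this again invokes the unproved X-outerness on $C_R(A)$) and finish with Chase--Harrison--Rosenberg, whereas the paper uses Lemma~\ref{twisted} to get faithfulness of $\Gamma$ on $Z(R)$, a generic-point dimension count to get $C_R(A)=Z(R)$, and a fiberwise Jacobson-radical computation to get the \'etale covering. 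The gap is thus localized entirely in the treatment of the X-inner case, but it is a real one.
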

\begin{proof}
Let us put $Z(R)=Z_1.$
Remark that $Z=Z_1^{\Gamma}.$
Indeed, since $ R\rtimes\Gamma$ is an Azumaya algebra over $Z$, then $Z(R\rtimes\Gamma)=Z,$
in particular $R$ commutes with $Z,$ therefore $Z\subset Z_1$. Hence $Z_1^{\Gamma}=Z.$

Since $R$ is a $\Gamma$-Galois covering of $A$, it follows that $R$ is a projective left, right $A$-modules of rank $|\Gamma|.$
Since $Z\subset Z_1,$ it follows that $R$ is
a module over $ A\otimes_{Z}A^{op}$. As $A$ is an Azumaya algebra over $Z$,
it follows that $R\cong A\otimes_{Z}B,$ where $B$ is the centralizer of
$A$ in $R.$ Thus we conclude  that $B\rtimes\Gamma$ is an Azumaya
algebra over $Z$ of rank $|\Gamma|^2.$ Also $Z(B)=Z_1.$
It follows from Lemma\ref{twisted} that the action of $\Gamma$ on $Z_1$ must be faithful.
Also it is clear that  $Z_1$ has no nilpotent elements.
Let $ \eta\in \spec (Z)$ be the generic point. Then $Z(B_{\eta})=(Z_1)_{\eta}$ and $B_{\eta}$
 is a $|\Gamma|$-dimensional semi-simple $Z_{\eta}$-algebra. On the other hand,
 since $\Gamma$ acts faithfully on $(Z_1)_{\eta}$ and $(Z_1)^{\Gamma}_{\eta}=Z_{\eta},$ it follows that 
$(Z_1)^{\Gamma}_{\eta}$ is $|\Gamma|$-dimensional over $(Z)_{\eta}.$
Therefore $(Z_1)_{\eta}=B_{\eta}.$  Hence $B=Z_1,$ since $B$ has no $Z$-torsion.

Now we claim that $\spec (Z_1)\to \spec (Z)$ is an \'etale covering with Galois group $\Gamma.$
Indeed, for any $\chi\in \spec (Z)$, we have that $(Z_1)_{\chi}\rtimes\Gamma$ is a matrix
algebra. Therefore $J((Z_1)_{\chi})=0$. Since $\dim (Z_1)_{\chi}=|\Gamma|,$ it follows that 
$$(Z_1)_{\chi}=\bold{k}\times\cdots\times\bold{k}.$$
Hence $\spec (Z_1)\to \spec (Z)$ is an \'etale covering with Galois group $\Gamma,$ as desired.

\end{proof}

We will also need the following result on lifting of $p'$-order Galois coverings from characteristic $p$ to 
characteristic 0 \footnote{I am grateful to D.Harbater, especially
P.Achinger and A.Javanpeykar for the proof.}.
It will only be used in the proof of Theorem \ref{g}, not for Theorem \ref{Main}.
\begin{lemma}\label{lifting}
Let $X$ be a smooth geometrically connected affine variety over $S$, where $S\subset \mathbb{C}$ is a finitely generated ring. 
Suppose that a finite group $\Gamma$ appears as a quotient of the \'etale fundamental group of $X_{\bold{k}}$  for all
large enough primes $p>>0$, where $X_{\bold{k}}=X\times_{\spec (S)}\spec (\bold{k})$ is a $\mod p$ reduction of $X$
by a base change $S\to \bold{k}$ to an algebraically closed field of characteristic $p.$
Then $\Gamma$ is a quotient of the  fundamental group of $X(\mathbb{C}).$

\end{lemma}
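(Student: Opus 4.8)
The plan is to deduce the statement from Grothendieck's specialization theorem for the tame fundamental group, after spreading out $X$ to a sufficiently good model over an open subscheme of $\spec(S)$. First, since $\Gamma$ is a fixed finite group, for $p\gg 0$ we have $p\nmid|\Gamma|$; so it suffices to produce one prime $p$ with $p\nmid|\Gamma|$, a closed point $s\in\spec(S)$ of residue characteristic $p$ and a geometric point $\bar s$ above it, and then to show that a finite quotient of $\pi_1(X_{\bar s})$ whose order is prime to $p$ is a quotient of $\pi_1(X_{\mathbb{C}})$. By the Riemann existence theorem $\pi_1(X_{\mathbb{C}})$ is the profinite completion of the topological group $\pi_1(X(\mathbb{C}))$, and a finite group is a quotient of the profinite completion of an abstract group precisely when it is a quotient of the group itself; so the displayed reduction does give the Lemma.

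First I would spread out. Over $K=\mathrm{Frac}(S)$ the variety $X_K$ is smooth affine and geometrically connected, so by Nagata compactification together with resolution of singularities in characteristic zero we may fix an open immersion $X_K\hookrightarrow\overline{X}_K$ with $\overline{X}_K$ smooth proper and geometrically connected over $K$ and $D_K:=\overline{X}_K\setminus X_K$ a normal crossings divisor. Spreading this out over a localization $S'=S[1/N]$ yields a smooth proper morphism $\overline{X}\to\spec(S')$ with geometrically connected fibres, containing $X_{S'}$ as a dense open subscheme, with $D:=\overline{X}\setminus X_{S'}$ a relative normal crossings divisor; shrinking $S'$ further I also arrange that $X_{S'}\to\spec(S')$ is smooth with geometrically connected fibres. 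Since $S'$ is a finitely generated $\mathbb{Z}$-algebra which is an integral domain of characteristic zero, the morphism $\spec(S')\to\spec(\mathbb{Z})$ is dominant with constructible image and therefore meets all but finitely many primes; so for $p$ large enough we have $p\nmid N$, $p\nmid|\Gamma|$, and $\spec(S')$ has a closed point $s$ of residue characteristic $p$. The fibre $X_{\bar s}$ is then one of the reductions occurring in the hypothesis, so $\Gamma$ is a quotient of $\pi_1(X_{\bar s})$.

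Then comes the key input. Letting $\bar\eta$ be a geometric generic point of $\spec(S')$, Grothendieck's specialization theorem for tame fundamental groups (SGA~1, Exp.~X and~XIII; see also Grothendieck--Murre) provides a specialization homomorphism $\pi_1^{t}(X_{\bar\eta},D_{\bar\eta})\longrightarrow\pi_1^{t}(X_{\bar s},D_{\bar s})$ which is surjective and induces an isomorphism on maximal prime-to-$p$ quotients. Since $\mathrm{char}(\bar\eta)=0$ we have $\pi_1^{t}(X_{\bar\eta},D_{\bar\eta})=\pi_1(X_{\bar\eta})$, and since every finite \'etale cover of order prime to $p$ is automatically tame, the maximal prime-to-$p$ quotient of $\pi_1(X_{\bar s})$ agrees with that of $\pi_1^{t}(X_{\bar s},D_{\bar s})$. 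Hence $\Gamma$, a quotient of $\pi_1(X_{\bar s})$ of order prime to $p$, is a quotient of $\pi_1(X_{\bar\eta})$. Finally, the \'etale fundamental group of a scheme of finite type is invariant under extension of algebraically closed base fields, so $\pi_1(X_{\bar\eta})\cong\pi_1(X_{\mathbb{C}})$; combined with the first paragraph this shows $\Gamma$ is a quotient of $\pi_1(X(\mathbb{C}))$.

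I expect the real work to be in the spreading-out step: producing, over one common open $\spec(S')$, a smooth proper model $\overline{X}$ with geometrically connected fibres together with a compactifying relative normal crossings divisor $D$, so that the tame specialization theorem literally applies; one must also verify that inverting $N$ in $S$ discards only finitely many primes (so that arbitrarily large primes of good reduction survive) and that geometric connectedness of the fibres persists after the base change to $\bar s$. The fundamental-group statement itself is classical once such a model is at hand. An alternative, which in the end re-proves the same specialization theorem, would be to descend the connected $\Gamma$-cover of $X_{\bar s}$ to a finite field, lift it across a complete discrete valuation ring by tame (prime-to-$p$) deformation theory, and algebraize it.
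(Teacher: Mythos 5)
Your proposal is correct and takes essentially the same route as the paper: both arguments fix a normal crossings compactification, spread everything out over a localization of $S$, and lift the prime-to-$p$ (hence tame) $\Gamma$-cover of the special fibre to characteristic zero, the paper doing this over a complete discrete valuation ring (Witt vectors) via Lieblich--Olsson, Corollary A.12, which is the same tame specialization input you invoke from SGA~1 and Grothendieck--Murre. Your explicit treatment of the Riemann existence/profinite completion step is merely making precise a point the paper leaves implicit, so there is no substantive difference in approach.
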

\begin{proof}

This follows from general results about \'etale coverings tamely ramified across normal crossing divisors.
 Indeed, let $\overline{X}$ be a good compactification of $X_{\mathbb{C}}$, thus it is a smooth
projective variety such that $\overline{X}\setminus X_{\mathbb{C}}$ is a normal crossings divisor.
By enlarging $S$ we may assume that $\overline{X}$ is defines over $S$ and
$\overline{X}\setminus X$ is a normal crossings divisor over $S$. For $p>>0$, for a base change $S\to \bold{k},$
$\overline{X_{\bold{k}}}\setminus X_{\bold{k}}$ is a divisor with normal crossings.
Let us choose an embedding $S\to V$, where $V$ is a complete discrete valuation ring with the
residue field $\bold{k}$-an algebraically closed field of characteristic $p.$
For example we may take $V$ to be the ring of Witt vectors over $\bold{k},$ where $\bold{k}$
is large enough algebraically closed field of characteristic $p.$
Then it follows from [\cite{LO}, Corollary A.12] that $X_V$ has a connected Galois covering
with the Galois group $\Gamma,$ 
Applying a base change $V\to \mathbb{C}$ we obtain
the desired $\Gamma$-Galois covering of $X_{\mathbb{C}}.$
\end{proof}

%The following immediately follows from a much stronger result of Brown and Gordon \cite{BG}
%\begin{lemma}\label{BG}
%Let $A$ a be Poisson $\mathbb{C}$-algebra, $B$ be its Poisson $\mathbb{C}$-subalgebra
%such that $A$ is finite over $B$ and $\spec B$ is a symplectic variety. Then for all $x\in \spec B,$
%algebras $A_x$ are isomorphic.

%\end{lemma}

\section{The proof of the main result for rings of differential operators}

We start by the following Lemma. It shows that to prove Theorem \ref{Main}, it will
suffice to prove that $R$ as a bimodule over $D(X)$ is supported on the diagonal of $X\times X.$
\begin{lemma}\label{K}
Let $X$ be a smooth affine variety over $\mathbb{C},$ and let $R$ be a $\mathbb{C}$-domain.
Assume that a finite simple group $\Gamma$ acts faithfully on $R$ so that $R^{\Gamma}=D(X).$ 
Moreover assume that for any $f\in \mathcal{O}(X), \ad(f)=[f, -]$ acts locally nilpotently on $R.$
Then $R=D(Y),$ where $Y$ is a $\Gamma$-Galois covering of $X.$
\end{lemma}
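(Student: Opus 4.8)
The plan is to exploit the reduction mod $p$ technique together with the Azumaya structure on differential operators in characteristic $p$ recalled above. First I would fix a finitely generated $\mathbb{Z}$-subalgebra $S\subset\mathbb{C}$ over which $X$, the algebra $R$, the $\Gamma$-action, and the isomorphism $R^{\Gamma}\cong D(X)$ are all defined, so that after base change to an algebraically closed field $\bold{k}$ of large characteristic $p$ (not dividing $|\Gamma|$) we get a smooth affine $X_{\bold{k}}$, a $\bold{k}$-domain $R_{\bold{k}}$ with a faithful $\Gamma$-action, and $R_{\bold{k}}^{\Gamma}=D(X_{\bold{k}})$. Here I use that invariants commute with flat base change in the $p'$ setting (averaging over $\Gamma$), and that being a domain, smoothness, etc.\ are preserved for $p\gg0$. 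The hypothesis that $\ad(f)$ acts locally nilpotently on $R$ for $f\in\mathcal{O}(X)$ passes to $R_{\bold{k}}$, and in particular $f^{p}$ is central in $R_{\bold{k}}$; combined with $\theta^{p}-\theta^{[p]}$ being central in $D(X_{\bold{k}})$, this should force the central subalgebra $Z:=Z(D(X_{\bold{k}}))=\mathcal{O}(T^{*}(X_{\bold{k}})^{(1)})$ of the Azumaya algebra $D(X_{\bold{k}})$ to lie in the center of $R_{\bold{k}}$ as well — this is the step where I need local nilpotence, since it lets me run a Jordan-type argument showing $p$-th powers of the generators of $D(X_{\bold{k}})$ act centrally on all of $R_{\bold{k}}$.

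Next I would apply Lemma \ref{Susan}: since $A:=D(X_{\bold{k}})\otimes\text{(generic point of }Z)$ — more precisely since $D(X_{\bold{k}})$ localized at the generic point of $Z$ is a simple Noetherian domain with center a field, Lemma \ref{Susan} tells me that the localization of $R_{\bold{k}}$ is a $\Gamma$-Galois covering of it. Then Proposition \ref{Azu}, applied over (a suitable localization of) $Z$, yields that $\spec(Z(R_{\bold{k}}))\to\spec(Z)=T^{*}(X_{\bold{k}})^{(1)}$ is a $\Gamma$-Galois covering $Y_{\bold{k}}\to T^*(X_{\bold{k}})^{(1)}$ and $R_{\bold{k}}=D(X_{\bold{k}})\otimes_{Z}Z(R_{\bold{k}})$. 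Now Lemma \ref{Poisson} (applied to the symplectic variety $T^*(X_{\bold{k}})^{(1)}$, or rather its untwisted form via the Poisson structure coming from the Azumaya algebra — here I should be a little careful about the Frobenius twist, but the conic structure descends) produces a $\Gamma$-Galois covering $Y'_{\bold{k}}\to X_{\bold{k}}$ with $Z(R_{\bold{k}})=\mathcal{O}(T^*(Y'_{\bold{k}}))$, and then $R_{\bold{k}}=D(X_{\bold{k}})\otimes_{\mathcal{O}(T^*X_{\bold{k}})^{(1)}}\mathcal{O}(T^*Y'_{\bold{k}})$ is exactly $D(Y'_{\bold{k}})$ by the standard description of crystalline differential operators on an étale cover. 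Thus for each large $p$ we obtain $R_{\bold{k}}\cong D(Y_{\bold{k}})$ with $Y_{\bold{k}}\to X_{\bold{k}}$ a $\Gamma$-Galois cover.

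Finally I would lift back to characteristic zero. The existence of the étale $\Gamma$-cover $Y_{\bold{k}}\to X_{\bold{k}}$ for all $p\gg0$, via Lemma \ref{lifting}, gives a $\Gamma$-Galois covering $Y\to X_{\mathbb{C}}$; I must then check that $D(Y)$ is the right lift, i.e.\ that $R$ itself — not merely its reductions — is isomorphic to $D(Y)$ as a $\Gamma$-algebra. The way to do this is to note that $D(Y)$ carries the natural $\Gamma$-action with $D(Y)^{\Gamma}=D(X)$, and that $D(Y)$ is the unique $\Gamma$-Galois covering of $D(X)$ with $\ad(\mathcal{O}(X))$ locally nilpotent and the prescribed reductions: one compares the filtrations (order filtration on $R$ inherited from $D(X)$ via the $\mathcal{O}(X)$-bimodule structure, using the local nilpotence hypothesis to build a good filtration), shows $\gr R\cong \mathcal{O}(T^*Y')$ compatibly, and concludes $R\cong D(Y')$ by a standard lifting-of-filtered-algebras argument; the $\Gamma$-equivariance is automatic since $\Gamma$ acts compatibly at the associated graded level. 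The main obstacle I anticipate is precisely the passage from "every mod $p$ reduction is $D(Y_{\bold{k}})$" to the integral/characteristic-zero statement "$R\cong D(Y)$": controlling the $S$-model of $R$ well enough that its generic fiber is forced to be $D(Y)$, rather than some a priori different $S$-algebra with the same reductions, is the delicate point, and it is here that the good filtration coming from local nilpotence of $\ad(\mathcal{O}(X))$ does the essential work.
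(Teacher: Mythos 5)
Your plan takes a genuinely different route from the paper's proof of this lemma, and it has a real gap at its crux. The mod-$p$ portion of your argument (spread out over $S$, reduce, and use Lemmas \ref{Susan}, \ref{Azu} and \ref{Poisson} to get $R_{\mathbf{k}}\cong D(Y_{\mathbf{k}})$ for $p\gg 0$) essentially reproduces the reduction step that the paper performs inside the proof of Theorem \ref{Main} — a step which, notably, does not use the local nilpotence hypothesis at all. But Lemma \ref{K} is precisely the characteristic-zero endgame that the paper isolates because the passage from ``every reduction is $D(Y_{\mathbf{k}})$'' back to ``$R\cong D(Y)$'' cannot be waved through. Lemma \ref{lifting} only asserts that $\Gamma$ is a quotient of the fundamental group of $X(\mathbb{C})$, i.e.\ that \emph{some} $\Gamma$-cover $Y\to X$ exists; it provides no mechanism matching a particular cover to $R$, and the paper deliberately uses it only for Theorem \ref{g}, where the goal is a contradiction rather than an identification. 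Your proposed repair — build the filtration from local nilpotence, ``show $\gr R\cong\mathcal{O}(T^{*}Y)$ compatibly,'' and finish by a ``standard lifting-of-filtered-algebras argument'' — assumes exactly what must be proved: that the filtration $R_n=\lbrace x:\ad(f)^{n+1}(x)=0,\ \forall f\in\mathcal{O}(X)\rbrace$ exhausts $R$ and has associated graded the functions on the cotangent bundle of a finite \'etale cover. Establishing this is the entire substance of the paper's argument: Kashiwara's theorem (applied to $R$ as a $D(X\times X)$-module supported on the diagonal, which is what local nilpotence buys) gives $R\cong R'\otimes_{\mathcal{O}(X)}D(X)$ with $R'$ the centralizer of $\mathcal{O}(X)$; the Brown--Gordon theorem (Lemma \ref{BGord}) applied to $\gr R$ as a Poisson order over $\mathcal{O}(T^{*}(X))$ shows the fibers of $R'$ over $X$ are mutually isomorphic, whence $\spec(Z(R'))\to X$ is finite \'etale and $R'$ is Azumaya; and the group-theoretic input (Lemmas \ref{Susan} and \ref{twisted}, i.e.\ Montgomery and Howlett--Isaacs, with $\Gamma$ simple) is what excludes the case $Z(R')=\mathcal{O}(X)$ and forces $R=Z(R')\otimes_{\mathcal{O}(X)}D(X)=D(Y)$. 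None of these steps, nor substitutes for them, appear in your final paragraph; moreover comparing $\gr(R_{\mathbf{k}})$ with $(\gr R)_{\mathbf{k}}$ uniformly in $p$ (filtration degrees can jump under reduction) is itself a nontrivial point you leave untouched.

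A secondary inaccuracy: local nilpotence of $\ad(f)$ for $f\in\mathcal{O}(X)$ does not force $Z(D(X_{\mathbf{k}}))\subset Z(R_{\mathbf{k}})$. It can give centrality of the elements $f^{p}$ (for $p$ exceeding a nilpotence bound on finitely many module generators), but it says nothing about the generators $\theta^{p}-\theta^{[p]}$. What actually yields $Z(D(X_{\mathbf{k}}))\subset Z(R_{\mathbf{k}})$ is the Galois-covering argument inside Proposition \ref{Azu}, using $R_{\mathbf{k}}\rtimes\Gamma\cong End_{D(X_{\mathbf{k}})}(R_{\mathbf{k}})$ being Azumaya over $Z(D(X_{\mathbf{k}}))$ — and that needs no nilpotence hypothesis. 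This slip is repairable, but it highlights that in your plan the hypothesis of Lemma \ref{K} does no work in the mod-$p$ part, and all of its intended work is deferred to the characteristic-zero identification step that remains unproved.
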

\begin{proof}
We may view $R$ as a left module over $D(X\times X)$. Assumptions above imply
that $R$ is supported on the diagonal $\Delta(X)\subset X\times X.$
Let $R'$ be the centralizer of $\mathcal{O}(X)$ in $R.$ Clearly $R'$ is preserved by $\ad(y)$ for all $y\in Der_{\mathbb{C}}(\mathcal{O}(X),\mathcal{O}(X))=T_X.$
Thus $R'\otimes_{\mathcal{O}(X)}D(X)$ is naturally a $D(X)$-bimodule equipped with the bimodule map $R'\otimes_{\mathcal{O}(X)}D(X)\to R.$
Now recall that Kashiwara's theorem [\cite{GM}, Theorem 4.9.1] establishes an equivalence between the category
of $D(X\times X)$-modules supported on the diagonal and the category of $D(X)$-modules, given by direct and inverse image functors
corresponding to the diagonal embedding $\Delta:X\to X\times X.$ Applying this to $R$,
 we get that $R\cong R'\otimes_{\mathcal{O}(X)}D(X).$
Next we will introduce an ascending filtration on $R$ as follows. Let $$R_n=\lbrace x\in R,\quad \ad(f)^{n+1}(x)=0,\forall f\in \mathcal{O}(X)\rbrace, n\geq 0.$$
Then $$R_0=R', \quad R_1=R'T_X=T_XR', \quad R_nR_m\subset R_{n+m}.$$ 
This filtration restricts on $D(X)$ to the usual filtration by the order of differential operators. It follows that
$\gr R=R'\otimes_{\mathcal{O}(X)}\mathcal{O}(T^*(X))$ is a finitely generated module over $\mathcal{O}(T^{*}(X))=\gr (D(X)),$ moreover
$\gr R$ is a Poisson order over $\mathcal{O}(T^{*}(X)).$ Now lemma  \ref{BGord} implies that for all $\eta\in T^{*}(X)$, algebras
$(\gr R)_{\eta}$ are isomorphic to each other. Therefore, for all $\eta\in X,$ algebras $R'_{\eta}$
are mutually isomorphic. Put $Y=\spec (Z(R')).$ We claim that $Y\to X$ is finite \'etale map and $R'$
is an Azumaya algebra over $Y.$ Indeed, since for any $y\in T_X$ $$\ad(y)(\mathcal{O}(Y))\subset \mathcal{O}(Y),$$
then $R''=Z(R')\otimes_{\mathcal{O}(X)}D(X)$ is a $\Gamma$-invariant subalgebra of $R$,
and $R''^{\Gamma}=D(X).$
Thus applying the above argument to $R''$ instead of $R$ we get that  $\spec (Z(R'))\to X$ is a finite \'etale map.
Then we have a homomorphism of algebras $\theta:R''=\mathcal{O}(Y)\otimes_{\mathcal{O}(X)}D(X)\to D(Y)$ 
 given by mapping $y\in T_X$ to $ \ad(y)\in Der_{\mathbb{C}}(\mathcal{O}(Y), \mathcal{O}(Y))=T_{Y}.$
As $T_Y=T_X\otimes_{\mathcal{O}(X)}\mathcal{O}(Y),$ it follows that $\theta$ is an isomorphism: $R''=D(Y)$ and as a bimodule over $D(Y), R$ is supported on the diagonal of
$Y\times Y$. Hence in the above argument we may replace $D(X)$ by $D(Y)$ to conclude that $R'$ is an Azumaya agebra over $Y$.
Notice that up to now we have not used the action of $\Gamma$ on $R.$
 
Since $D(X)$ is a simple ring, it follows from lemma \ref{Susan} that $R$ is a $\Gamma$-Galois covering
of $D(X).$ Hence $R$ is a projective $D(X)$-module of rank $|\Gamma|$. Therefore $R'$ is also a projective $\mathcal{O}(X)$-module
of rank $|\Gamma|.$
  If the action of $\Gamma$ on $Z(R')$ is faithful then $Y\to X$ is a $\Gamma$-Galois covering, hence $R''=R$ and we
are done. Thus we may assume that $Z(R')=\mathcal{O}(X).$  Then $R'^{\Gamma}=\mathcal{O}(X)$ and
$\Gamma\subset Aut_{\mathcal{O}(X)}R'.$ Since $R\rtimes\Gamma$ is Morita equivalent to
$D(X),$ It follows that $R\rtimes\Gamma'$ is Morita equivalent to
$\mathcal{O}(X)$, which is a contradiction by  Lemma \ref{twisted}.
\end{proof}
\begin{remark}
It follows from the above proof that if $X$ is a smooth algebraic curve and $R$ is a $\mathbb{C}$-domain
containing $D(X)$, such that it is finite as both left and right $D(X)$-module, and $R$ is supported on the
diagonal of $X\times X$ as a $D(X)$-bimodule, then $R\cong D(Y)$, where $Y\to X$ is a finite \'etale map.
Indeed, this is because the Azumaya algebra in the proof must be split as the Brauer group over curves
is trivial over algebraically closed fields.
\end{remark}

\begin{proof}[Proof of Theorem \ref{Main}.]
As in the proof above, since $D(X)$ is a simple ring, it follows from \ref{Susan} that $R$ is a $\Gamma$-Galois covering
of $D(X).$ There exists finitely generated $\mathbb{Z}$- algebra $S\subset \mathbb{C}, \frac{1}{|\Gamma|}\in S,$
such that $X=X'_\mathbb{C},$ where $X'$ is a smooth affine variety over $S,$
and an $S$-subalgebra $R'$ of $ R$ such that $R=R'\otimes_{S}\mathbb{C},$ $\Gamma$ acts on $R'$
by $S$-automorphisms and $R'$ is  a $\Gamma$-Galois covering of $D(X').$

Therefore  for all large primes $p>>0$ there exists
a homomorphism $\rho:S\to \bold{k}$, where $\bold{k}$ is an algebraically closed field of characteristic $p$, such
that $R_{\bold{k}}=R'\otimes_S\bold{k}$ is a $\Gamma$-Galois covering of $D(X_{\bold{k}}),$ where $X_{\bold{k}}=X'\times _{\spec (S)}\spec (\bold{k}).$
For simplicity we will denote by $Z_0, Z_1$ the centers of $D(X_{\bold{k}}), R_{\bold{k}}$ respectively. 
Recall that $D(X_{\bold{k}})$ is an Azumaya algebra over $Z_0$, and
$\spec (Z_0)=T^{*}(X_{\bold{k}}).$

It follows from Proposition \ref{Azu} that $\Gamma$ acts faithfully on $Z_1, Z_0=Z_1^{\Gamma}$ and $\spec (Z_1)\to \spec (Z_0)$
is a $\Gamma$-Galois covering.
Applying Lemma\ref{Poisson}  to  the $\Gamma$-Galois covering $\spec (Z_1)\to \spec (Z_0)$, we conclude
 that there exists an affine $\bold{k}$-variety $Y_{\bold{k}}$
and an \'etale covering $Y_{\bold{k}}\to X_{\bold{k}}$ with the Galois group $\Gamma,$ such that
 $\spec (Z_1)\cong T^{*}(Y_{\bold{k}})$ as $\Gamma$-varieties.
Since $$D(Y_{\bold{k}})\cong D(X_{\bold{k}})\otimes_{Z_0}Z(D(Y_{\bold{k}})),$$
and $$Z(D(Y_{\bold{k}}))\cong \mathcal{O}(T^{*}(Y_{\bold{k}})),$$
 it follows that
$D(Y_{\bold{k}})\cong R_{\bold{k}}$ as $\Gamma$-algebras.

Next we will show that if $f\in\mathcal{O}(X'),$ then $\ad(f)$ is locally nilpotent on $R'.$
In view of Lemma \ref{K}, this will yield the theorem.
Let $x\in R'.$
Let $f_1,\cdots, f_m\in D(X')$ be such that $$x^m+\sum_{i=0}^{m-1} f_ix^{m-i}=0 .$$
Let $N=Max_i(\deg(f_i))$, here $\deg$ means the degree of a differential operator in $D(X')$.
We claim that $\ad(f)^{N+1}(x)=0$. We will show this by proving that $\ad(\bar{f})^m(\bar{x})=0$ in $R_{\bold{k}}$
for all large enough $p,$ where $\bar{y}$ denotes the image of an element $y\in R'$ under the base change map
$R'\to R_{\bold{k}}.$
Since $R_{\bold{k}}\cong D(Y_{\bold{k}}),$  we will equip $R_{\bold{k}}$ with the filtration
corresponding to the degree filtration of differential operators in $D(Y_{\bold{k}})$.
Hence $\gr R_{\bold{k}}=\mathcal{O}(T^{*}(Y_{\bold{k}})).$
Clearly this filtration restricts on $D(X_{\bold{k}})$ to the filtration corresponding to the order of differential
operators on $X_{\bold{k}}.$ Thus $\gr (\bar{f_i})\leq N, 1\leq i\leq m.$ 
Now it  follows that $\gr R_{\bold{k}}$ has no nilpotent elements
and is a torsion free $\gr D(X_{\bold{k}})=\mathcal{O}(T^{*}(X_{\bold{k}}))$-module.
Thus we have that 
$$\gr(\bar{x})^m=-\sum_{i=1}^m \gr(\bar{f_i})\gr(\bar{x})^{m-i}.$$
This implies that $\gr(\bar{x})\leq N.$
Hence for any $g\in \mathcal{O}(X_{\bold{k}}),$ we have $\ad(g)^m(\bar{x})=0.$ In particular,
$\ad(\bar{f})^m(\bar{x})=0.$ Therefore, $\ad(f)^m(x)=0$ and we are done.

\end{proof}

\section{The proof for enveloping algebras}

Before proving Theorem \ref{g} we will need to recall some standard facts about enveloping
algebras of semi-simple Lie algebras in characteristic $p>0.$
Let $G$ be a semi-simple, simply connected algebraic group of rank $n$ over $\mathbb{C}$, let $\mathfrak{g}$
be its corresponding Lie algebra. Let $p>>0$ be a very large prime,
and let $G_{\bf{k}}, \mathfrak{g}_{\bold{k}}$ denote  reductions of $G, \mathfrak{g}$ to an algebraically
closed field $\bold{k}$ of characteristic $p.$
We will be identifying $\mathfrak{g}_{\bold{k}}$ with $\mathfrak{g}_{\bold{k}}^{*}$ via a nondegenerate $G_{\bold{k}}$-invariant bilinear form
on $\mathfrak{g}_{\bold{k}}$, as usual.
 Let $f_1,\cdots, f_n$ be homogeneous generators
of $\bold{k}[\mathfrak{g}^*]^G=\bold{k}[f_1,\cdots, f_n].$ Let $\tilde{f_1},\cdots, \tilde{f_n}$ be
the corresponding generators of $(U\mathfrak{g}_{\bold{k}})^{G_{\bold{k}}}=\bold{k}[\tilde{f_1},\cdots, \tilde{f_n}].$
Given  $\chi \in  \spec (\bold{k}[\mathfrak{g}^*_{\bold{k}}]^{G_{\bold{k}}}),$ we will denote by $\mathcal{N}_{\chi}$
the preimage of $\chi$ under the map $F=(f_1,\cdots, f_n):\mathfrak{g}_{\bold{k}}\to \bold{k}^n.$
Of course for the origin $\chi=0, \mathcal{N}_0$ is the nilpotent cone of $\mathfrak{g}_{\bold{k}}.$ While 
for a generic $\chi, \mathcal{N}_{\chi}$ is the conjugacy class of a regular semi-simple element of $\mathfrak{g}_{\bold{k}}.$

Given $\chi\in \spec (\bold{k}[\mathfrak{g_{\bold{k}}}^*]^{G_{\bold{k}}}) $, we will say that $\chi$ is very generic if its 
values on $f_1,\cdots,f_n$ are algebraically independent over $F_p.$
Similarly, given $\chi\in \spec (U\mathfrak{g})^{G_{\bold{k}}}$ , we will say that it is very generic if
its values on $\tilde{f_1},\cdots,\tilde{f_n}$ are algebraically independent over $F_p.$
Let $\chi:U\mathfrak{g}_{\bold{k}}^{G_{\bf{k}}}\to \bold{k}$ be a character, and let
$U_{\chi}\mathfrak{g}_{\bold{k}}$ be the corresponding central reduction of $U\mathfrak{g}_{\bold{k}}.$
Then it is known that $Z(U_{\chi}\mathfrak{g}_{\bold{k}})\cong \mathcal{O}(\mathcal{N}_{\tilde{\chi}})$
for the corresponding $\tilde{\chi}.$ Moreover for a very generic $\chi$
character $\tilde{\chi}$ is also very generic, and $ U_{\chi}(\mathfrak{g}_{\bold{k}})$
is an Azumaya algebra. Indeed, 
Let $V$ be a simple $U_{\chi}(\mathfrak{g}_{\bold{k}})$-module. Then the $p$-character of $V$
viewed as a simple $\mathfrak{U}\mathfrak{g}$-module is regular. Hence $\dim V=p^d$, where
$d$ is half the dimension of the nilpotent cone of $\mathfrak{g}_{\bold{k}}.$
Therefore all simple $U_{\chi}(\mathfrak{g}_{\bold{k}})$-modules have the same dimension.
So  $U_{\chi}(\mathfrak{g}_{\bold{k}})$ is an Azumaya algebra by [\cite{BG1}, Proposition 3.1].

As varieties $\mathcal{N}_{\chi}$ are simply connected for very generic characters over $\mathbb{C},$
Lemma \ref{lifting} yields the following
 
 \begin{lemma}\label{Congugacy}
Let $G, \mathfrak{g}$ be as above and $p>>0.$ 
 Let $\chi:\bold{k}[\mathfrak{g_{\bold{k}}}]^G_{\bold{k}}\to \bold{k}$ be a very generic character.
 Then the \'etale fundamental group of $\mathcal{N}_{\chi}$ has no $p'$ finite group quotients.

 \end{lemma}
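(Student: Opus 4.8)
The plan is to deduce the statement from the simple-connectedness of $\mathcal N_\chi$ over $\mathbb C$ (recorded just above), transporting it to characteristic $p$ by the lifting mechanism for tamely ramified coverings (as in [\cite{LO}, Corollary A.12]) that already underlies Lemma \ref{lifting}. First I would record that, for a very generic $\chi$, the fiber $\mathcal N_\chi=F^{-1}(\chi)$ is isomorphic to $G_{\bold{k}}/T_{\bold{k}}$: a very generic point cannot lie on the discriminant hypersurface of the Chevalley map, since that is a proper closed subvariety of $\mathbb A^n$ cut out by a nonzero polynomial with coefficients in the prime field, which remains nonzero for $p\gg 0$; hence $\chi$ is the value of a regular semisimple element $x\in\mathfrak g_{\bold{k}}$, and $\mathcal N_\chi=G_{\bold{k}}\cdot x\cong G_{\bold{k}}/C_{G_{\bold{k}}}(x)=G_{\bold{k}}/T_{\bold{k}}$ because the centralizer of a regular semisimple element is a maximal torus. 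The same description over $\mathbb C$ gives $\mathcal N_\chi\cong G_{\mathbb C}/T_{\mathbb C}$, which is simply connected by the homotopy sequence of $T_{\mathbb C}\to G_{\mathbb C}\to G_{\mathbb C}/T_{\mathbb C}$ together with $\pi_1(G_{\mathbb C})=\pi_0(T_{\mathbb C})=0$.

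Now suppose, toward a contradiction, that for some $p\gg 0$ and some very generic $\chi$ over $\bold{k}$ the group $\pi_1(\mathcal N_\chi)$ admits a nontrivial finite quotient $\Gamma$ with $|\Gamma|$ prime to $p$; let $\widetilde{\mathcal N}\to\mathcal N_\chi$ be the associated connected $\Gamma$-Galois \'etale covering, which is tame because its degree $|\Gamma|$ is prime to $p$. I would spread the Chevalley morphism out over a finitely generated ring $S\subset\mathbb C$: after enlarging $S$ and inverting finitely many primes, the restriction of $F$ to the regular semisimple locus $U\subset\mathbb A^n$ is a smooth affine $S$-morphism $\mathfrak g_U\to U$ with geometrically connected fibers admitting a good relative compactification with relative normal crossings boundary — precisely the setting of [\cite{LO}, Corollary A.12]. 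Choosing a complete discrete valuation ring $V$ with residue field $\bold{k}$ (say the Witt vectors $W(\bold{k})$) together with a homomorphism $S\to V$ (which exists for $p\gg 0$ after enlarging $S$, by formal smoothness), I would lift $\chi$ to a $V$-point of $U$; since $U$ is open and $\spec V$ is local, any such lift stays in $U$, so the special fiber of the lifted family is $\mathcal N_\chi$ while its geometric generic fiber is a copy of $G/T$ over an algebraically closed field of characteristic $0$, hence simply connected. By [\cite{LO}, Corollary A.12] the tame covering $\widetilde{\mathcal N}\to\mathcal N_\chi$ extends to a tamely ramified (and, being \'etale over $\mathcal N_\chi$, \'etale over the whole family) $\Gamma$-Galois covering over $V$, whose geometric generic fiber is a connected $\Gamma$-Galois \'etale covering of a simply connected variety. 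This forces $\Gamma=1$, the desired contradiction.

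The step I expect to be the main obstacle is the spreading-out: producing, for $p\gg 0$, a good relative compactification of $\mathfrak g_U\to U$ over $S$ whose boundary is a relative normal crossings divisor, while simultaneously arranging the homomorphism $S\to V$ and the lift of $\chi$. In characteristic $0$ this is routine — $G/T$ is quasi-projective, and one resolves the boundary of a chosen compactification by Hironaka — after which one spreads the whole package out over $S$ and reduces modulo $p$; the only real work is bookkeeping which finitely many primes must be discarded.

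Finally, I would remark that one can bypass Lemma \ref{lifting} altogether and argue directly in characteristic $p$: for $p\gg 0$ the isomorphism $\mathcal N_\chi\cong G_{\bold{k}}/T_{\bold{k}}$ already holds, and the projection $G_{\bold{k}}/T_{\bold{k}}\to G_{\bold{k}}/B_{\bold{k}}$ is an iterated affine-line bundle (its fiber is the unipotent radical of $B_{\bold{k}}$), so $\pi_1(\mathcal N_\chi)\cong\pi_1(G_{\bold{k}}/B_{\bold{k}})$ by homotopy invariance of the \'etale fundamental group for $\mathbb A^1$-bundles; and $G_{\bold{k}}/B_{\bold{k}}$ is simply connected in every characteristic, being a smooth projective variety with a Bruhat cell decomposition. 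I would keep the lifting argument as the main line, since it uses only tools already assembled in the paper, but note this shortcut.
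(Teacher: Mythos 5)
Your main argument is essentially the paper's own proof: the paper deduces the lemma from the simple connectedness of $\mathcal{N}_\chi\cong G/T$ for very generic $\chi$ over $\mathbb{C}$ together with Lemma \ref{lifting}, whose proof is precisely your spreading-out / Witt-vector / [\cite{LO}, Corollary A.12] mechanism, so your identification of the regular semisimple fiber and the tame-lifting contradiction follow the same route, only with more details made explicit. One caution about your optional ``shortcut'': the \'etale fundamental group is \emph{not} homotopy invariant for $\mathbb{A}^1$-bundles in characteristic $p$ (Artin--Schreier coverings make $\pi_1$ of affine spaces and of $G_{\bold{k}}/T_{\bold{k}}$ highly nontrivial), so the asserted isomorphism $\pi_1(\mathcal{N}_\chi)\cong\pi_1(G_{\bold{k}}/B_{\bold{k}})$ is false as stated; only the prime-to-$p$ (tame) quotients are insensitive to the affine fibers, which is all the lemma needs, so that remark should be reformulated for the prime-to-$p$ fundamental group before it can replace the lifting argument.
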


\begin{proof}[Proof of Theorem \ref{g}.]

We start exactly as in the beginning of the proof of Theorem \ref{Main}. Since ${U}_{\chi}\mathfrak{g}$ is a simple $\mathbb{C}$-algebra, it follows from Lemma \ref{Susan} that $R$ is a Galois covering of
${U}_{\chi}\mathfrak{g}.$ Let $S$ be a  finitely generated subring of $\mathbb{C}$ over which $U_{\chi}\mathfrak{g}$ is defined, 
and let $R'$ be a $S$-subring of $R$ such that $R=R'\otimes_S\mathbb{C}$ and $R'$ is $\Gamma$-Galois covering
of ${U}_{\chi}(\mathfrak{g}).$ Let $S\to \bold{k}$ be a base change to algebraically closed field of characteristic $p>>0.$
Put $R_{\bold{k}}=R'\otimes\bold{k}.$ Since $U_{\chi}(\mathfrak{g}_{\bold{k}})$ is an Azumaya algebra over
$\mathcal{O}(\mathcal{N}_{\bar{\chi}})$  it follows from Proposition \ref{Azu} that  $\spec (Z(R_{\bold{k}}))\to \mathcal{N}_{\bar{\chi}}$ is a $\Gamma$-Galois
covering. 
But by Lemma \ref{Congugacy}, $\mathcal{N}_{\bar{\chi}}$ has no nontrivial $\Gamma$-Galois covering.
Therefore $$Z(R_{\bold{k}})\cong \Pi_{i=1}^{|\Gamma|} \mathcal{O}(\mathcal{N}_{\bar{\chi}})e_i,$$
where $e_i$ are orthogonal central idempotents $\sum_{i=1}^{|\Gamma|} e_i=1.$
Hence by Lemma \ref{Azu}, it follows that
$$R_{\bold{k}}\cong \Pi_{i=1}^{|\Gamma|}{U}_{\chi}(\mathfrak{g}_{\bold{k}})e_i.$$
In particular $R_{\bold{k}}$ has no nonzero nilpotent elements.

Next we claim that $R'$ is a Harish-Chandra bimodule over $\mathfrak{g}:$ the adjoint action of $\mathfrak{g}$ on $R'$
is locally finite.
Indeed, let $x\in R'$. It will suffice to show that  the nilradical of a Borel subalgebra $\mathfrak{n}\subset \mathfrak{g}$ acts
nilpotently on $x.$ Using the above isomorphism $R_{\bold{k}}\cong \Pi_{i=1}^nU_{\chi}(\mathfrak{g}_{\bold{k}})e_i$
we will introduce a PBW filtration on $R_{\bold{k}}$ which will restrict to the usual PBW
filtration on  $U_{\chi}(\mathfrak{g}_{\bold{k}}).$ Thus, 
$$\gr R_{\bold{k}}\cong \Pi_{i=1}^n\gr(U_{\chi}(\mathfrak{g}_{\bold{k}}))e_i,\quad  \deg(e_i)=0.$$
Just as in the proof of Theorem \ref{Main}, since $x$ is integral over $U_{\chi}\mathfrak{g},$ 
let $$x^m+\sum f_ix^{m-i}=0,\quad f_i\in U_{\chi}\mathfrak{g}.$$ Put $N=Max_i(\deg f_i)$, here $\deg f_i$ is understood as the filtration degree according
to the PBW filtration on $U_{\chi}(\mathfrak{g}_{\bold{k}}).$
Then it follows that $\deg (\bar{x})\leq N$ for all $p>>0,$ where $\bar{x}$ is the image of $x$ under the map 
$R\to R_{\chi}=\Pi_iU_{\chi}(\mathfrak{g}_{\bold{k}})e_i.$ Hence
$$x=\sum x_ie_i, \quad x_i\in U_{\chi}(\mathfrak{g})_{\bold{k}},\quad\deg(x_i)\leq N.$$
Let $l$ be such that $\ad(\mathfrak{n})^l(\mathfrak{g})=0$. Then $\ad(\mathfrak{n})^{lN}(x_i)=0,$ so $\ad(\mathfrak{n})^{lN}(\bar{x})=0$. Hence
 $\ad(\mathfrak{n})^{lN}(x)=0.$ Therefore the adjoint action of $\mathfrak{g}$ on $R'$ is locally finite as desired.
Now it follows that for any semi-simple $h\in \mathfrak{g}, R$ is $\mathbb{Z}$-graded according to
eigenvalues of $\ad(h)$ on $R.$ Let $h_1,\cdots, h_m\in \mathfrak{g}$ be semi-simple element 
such that they generate $\mathfrak{g}$ as a Lie algebra. 
By enlarging $S$ if necessary we may assume that $R'$ is $\mathbb{Z}^m$-graded algebra
according to  eigenvalues of $\ad(h_i), 1\leq i\leq m.$
Hence the degree 0 component  $R'$ is the centralizer $\mathfrak{g}$ in $R'.$
Thus $(R_{\bold{k}})$ is also $\mathbb{Z}^m$-graded.
  Since $e_i^2=e_i$ and  $R_{\bold{k}}$
has no nilpotent elements, this forces $e_i$ to have degree 0.
Denote the centralizer of $\mathfrak{g}$ in $R'$ (respectively in $R$) by $R'_0$ (respectively $R_0$).

%Therefore, since elements of degree 0 with respect to all $\ad(h_i)$ grading are
%centralizers of $\mathfrak{g}$ in $R$,  we conclude that
%$(R_0)_{\bold{k}}$ contains nontrivial idempotents for all $p>>0.$

Now we claim that $R_0$ is  commutative. Indeed, since $(R'_0)_{\bf{k}}$ is in the centralizer of $\mathfrak{g}$ in $R_{\bf{k}}$, which is 
$Z(U_{\chi}(\mathfrak{g}_{\bold{k}}))e_i,$ hence commutative. So $(R'_0)_{\bf{k}}$ is commutative for all $p>>0$, hence so is $R'_0$
and $R_0.$ We also have that $e_i\in  (R'_0)_{\bf{k}}, 1\leq i\leq n.$ In particular $\mathbb{C}\neq R_0.$
Thus $R_0$ is a commutative $\mathbb{C}$-domain, equipped with a $\mathbb{C}$-action of $\Gamma$, such that
$\mathbb{C}=R_0^{\Gamma}\neq R_0$. This is a contradiction.

%Hence $R_0$ is an affine commutative domain such that $\spec ((R_0)_{\mathbb{C}})$ is connected, while $\spec ((R_0)_{\bf{k}})$ is disconnected for all $p>>0.$
%This is a contradiction since special fibers of geometrically connected variety are geometrically connected
%on a non-empty open subset [\cite{GD}, Theorem 9.7.7].

%Similar proof shows that $U\mathfrak{g}$ has no nontrivial Galois covering.

\end{proof}

\noindent\textbf{Acknowledgement:} I am extremely grateful to A. Eshmatov for many useful discussion, particularly
for asking me the question whether the Weyl algebra can be Morita equivalent to a nontrivial skew group ring, answering which
led  to this paper. I would also like to thank the referee for many helpful suggestions.

\end{document}